\date{}
\title{Embeddings of $E(1,6)$ in $E(5,10)$ and $E(4,4)$} 
\author{Nicoletta Cantarini}\author{Fabrizio Caselli}\author{Victor Kac}
\subjclass[2010]{08A05, 17B05 (primary), 17B65, 17B70 (secondary)}
\keywords{Linearly compact Lie superalgebras, embeddings, actions of vector fields on differential forms.}
\address{Fabrizio Caselli and Nicoletta Cantarini, Dipartimento di matematica, Universit\`a di Bologna, Piazza di Porta San Donato 5, 40126 Bologna, Italy}
\email{fabrizio.caselli@unibo.it}
\email{nicoletta.cantarini@unibo.it}
\address {Victor Kac, Department of Mathematics, MIT, 77 Massachusetts Avenue, Cambridge, MA 02139, USA}
\email{kac@math.mit.edu}
\newtheorem{theorem}{Theorem}[section] 
\newtheorem{lemma}[theorem]{Lemma} 
\newtheorem{corollary}[theorem]{Corollary} 
\newtheorem{proposition}[theorem]{Proposition} 
\newtheorem{definition}[theorem]{Definition} 
\newtheorem{remark}[theorem]{Remark}
\def\de{\partial}
\def\Z{\mathbb{Z}} 
\def\g{\mathfrak{g}}
\def\C{\mathbb{C}}
\def\0{\bar{0}}
\def\1{\bar{1}}
\newcommand{\inlinewedge}{\textrm{\raisebox{0.6mm}{\footnotesize $\bigwedge$}}}
\newcommand{\displaywedge}{\textrm{\raisebox{0.6mm}{\tiny $\bigwedge$}}}
\DeclareMathOperator{\diver}{div}
\DeclareMathOperator{\ad}{ad}
\begin{document} 
\maketitle
\begin{abstract} We study the embeddings of the exceptional infinite-dimensional Lie superalgebra $E(1,6)$ in the exceptional Lie superalgebras $E(5,10)$ and $E(4,4)$. These questions arose in recent works on enhanced symmetries in some supersymmetric theories by N.\ Garner, S.\ Raghavendran, I. Saberi and B.\ Williams.
\end{abstract}
%\thispagestyle{empty}
%------------------------------------------------------------------------ 
\section{Introduction}
The classification of simple linearly compact Lie superalgebras \cite{K} includes five exceptional ones, denoted by $E(1,6)$, $E(3,6)$, $E(3,8)$, $E(4,4)$, and $E(5,10)$ (notation $E(m,n)$ means that this Lie superalgebra can be realized most economically by vector fields in a formal neighborhood of a point in a $(m|n)$-dimensional supermanifold). A few papers appeared recently, where these Lie superalgebras occurred as symmetries of some theories, namely, $E(5,10)$ appeared in the 11-dimensional supergravity, $E(3,6)$ in the holographic approach to 6-dimensional superconformal index, and $E(1,6)$ in the enhanced $N=8$ supersymmetric Chern Simons theory, see references \cite{RSW}, \cite{RW},
\cite{G}, respectively.

From this perspective and in view of representation theory of linearly compact Lie superalgebras, it is therefore important to study how these Lie superalgebras "interact" with each other.

An embedding of $E(3,6)$ in $E(5,10)$ and the decomposition of $E(5,10)$ with respect to the adjoint representation of $E(3,6)$ on it was studied in \cite{KR}, and this decomposition was used in \cite{RW}.

In the present paper we, first, present a new description of the Lie superalgebra $E(1,6)$ (Section \ref{preliminary}). Then we study the embedding of $E(1,6)$ in $E(5,10)$ and the decomposition of $E(5,10)$ with respect to the adjoint representation of $E(1,6)$ on it (Theorem \ref{3.5}).

Next, we construct explicitly an embedding of $E(1,6)$ in $E(4,4)$ (Theorem
\ref{4.7}). This is achieved by exploiting new properties of the classical realization of $E(1,6)$ inside $K(1,6)$ (Theorem \ref{4.6}).  In this case, however, it seems a difficult problem to find the corresponding decomposition.

Note that neither $E(3,8)$, nor $E(4,4)$ can be embedded in any of the other five exceptional linearly compact Lie superalgebras \cite{CK}.

\section{The exceptional Lie superalgebras $E(5,10)$, $E(4,4)$ and $E(1,6)$ and their principal gradings.}\label{preliminary}
Let us recall the definition of the simple linearly compact Lie superalgebras 
$E(5,10)$, $E(4,4)$ and $E(1,6)$. For the description of these algebras and their $\Z$-gradings we refer to \cite{CK}, \cite{CaK} and \cite{K}.

We use the following notation. The linearly compact Lie algebra $W_n$ 
(resp.\ $S_n$) consists of all vector fields $\sum_{j=1}^nP_j(x)\partial_j$, where 
$P_j(x)\in\C[[x_1, \dots, x_n]]$, $\partial_j=\frac{\partial}{\partial x_j}$ (resp.\
of all zero-divergence vector fields in $W_n$). The Lie algebra $W_n$ acts on the linearly compact vector space $\Omega^k(n)^{\lambda}$ of all differential $k$-forms with coefficients in $\C[[x_1, \dots, x_n]]$
by the formula
\[X(\omega)=L_X\omega+\lambda(\diver X)\omega, \,\, X\in W_n,\,\,\omega\in 
\Omega^k(n),\,\,\lambda\in\C,\]
where $L_X\omega$ denotes the Lie derivative of the differential $k$-form $\omega$ along the vector field $X$. Finally, $\Omega^k(n)={\Omega^k(n)}^0$ (resp.\ 
$\Omega^k(n)_{cl}$) denotes the space of all (resp.\ all closed) differential $k$-forms in $n$ variables. We will adopt the simple notation $d_{i_1i_2\cdots i_k}$ to denote the differential $k$-form $dx_{i_1}\wedge dx_{i_2}\wedge \cdots dx_{i_k}$.

The Lie superalgebra $\g=E(5,10)$ has  even part $\g_{\bar{0}}=S_5$, and odd part $\g_{\bar{1}}=\Omega^2(5)_{cl}$.
The bracket between a vector field and a two-form is given by the Lie derivative, and on $\g_{\bar{1}}$  the bracket is
$$[fd_{ij},g d_{hk}]=\varepsilon_{ijkl}fg\partial_{[ijkl]},\,\, f,g\in \C[[x_1,\dots,x_5]],$$ 
where, for $i,j,k,l\in \{1,2,3,4,5\}$, $\varepsilon_{ijkl}$ and $[ijkl]$ are defined as follows: if $|\{i,j,k,l\}|=4$ we let $[ijkl]\in \{1,2,3,4,5\}$ be such that $|\{i,j,k,l,[ijkl]\}|=5$ and let $\varepsilon_{ijkl}$ be the sign of the permutation 
$(i,j,k,l,[ijkl])$; if $|\{i,j,k,l\}|<4$ then $\varepsilon_{ijkl}=0$.

The $\Z$-gradings of $\g=E(5,10)$, up to automorphisms, are  parametrized by quintuples of  integers $(a_1, a_2, a_3, a_4, a_5)$ with even sum, by letting 
\[\deg(x_i)=-\deg(\partial_i)=a_i,\,\, \mbox{and}\,\, \deg d=-\frac{1}{4}\sum_{i=1}^5a_i.\]
Such a grading is called  a grading of type  $(a_1, a_2, a_3, a_4, a_5)$. The grading of type
$(2,2,2,2,2)$ is called the principal grading.

%The Lie superalgebra $\g$ has a consistent, irreducible, transitive $\Z$-grading of depth 2 where,
%for $k\in\N$,
%\begin{align*}
%\g_{2k-2}&=\langle f\partial_i ~|~i=1,\dots,5, f\in\C[[x_1,\dots, x_5]]_{k}\rangle\cap S_5\\
%\g_{2k-1}&=\langle fd_{ij} ~|~ i,j=1,\dots,5, f\in\C[[x_1,\dots, x_5]]_{k}\rangle\cap\Omega^2_{cl}
%\end{align*}
%where 
%by $\C[[x_1,\dots, x_5]]_{k}$ we denote the homogeneous component of $\C[[x_1,\dots, x_5]]$ of degree $k$. This grading is usually called the principal grading of $E(5,10)$. 
%More in general, every $\Z$-grading of $E(5,10)$ is parametrized by a quintuple of positive integers $(a_1, a_2, a_3, a_4, a_5)$ such that
%$\sum_{i=1}^5a_i\in 2\N$ where $a_i=\deg(x_i)=-\deg(\partial_i)$ and $\deg d=-\frac{1}{4}\sum_{i=1}^5a_i$. Such a grading is called  a grading of type 
%$(a_1, a_2, a_3, a_4, a_5)$; for example the principal grading of $E(5,10)$ is the grading of type $(2,2,2,2,2)$.

%Note that $\g_0\cong \mathfrak{sl}_5$, $\g_{-2}\cong (\C^5)^*$, $\g_{-1}\cong \Lambda^2\C^5$ as $\g_0$-modules (where $\C^5$ denotes the
%standard $\mathfrak{sl}_5$-module).
%We set $\g_{-}=\g_{-2}\oplus \g_{-1}$, $\g_{+}=\oplus_{j>0}\g_j$ and $\g_{\geq 0}=\g_0\oplus \g_+$.

The Lie superalgebra $L=E(4,4)$ is defined as follows: its even part $L_{\bar{0}}$ is the Lie algebra $W_4$  and its odd part $L_{\bar{1}}$ is isomorphic,  as an $L_{\bar{0}}$-module, to $\Omega^1(4)^{-\frac{1}{2}}$. The bracket of two odd elements $\omega_1, \omega_2\in L_{\bar{1}}$ is
$$[\omega_1, \omega_2]=d\omega_1\wedge \omega_2+\omega_1\wedge d\omega_2,$$
where the  differential 3-forms are identified with vector fields via contraction with the standard volume form.

The Lie superalgebra $L=E(4,4)$ has, up to automorphisms, only one irreducible $\Z$-grading $L=\prod_{j\geq -1}L_j$, called the principal grading, defined by setting
\[\deg x_i=1,\,\, \deg d=-2,\]
where $L_0$ is isomorphic to the unique non-trivial
central extension $\hat{p}(4)$ of the finite-dimensional Lie superalgebra $p(4)$
(see, for example, \cite{CK}, \cite{S} or \cite{CCK}). 

\medskip

Before recalling the construction of the Lie superalgebra $E(1,6)$ we
make the following observation. Let us denote by $(\Omega^k(n))_m$ the space of differential $k$-forms in $n$ variables $x_1,\dots, x_n$ with coefficients in the space $S^m\C^n$ of homogeneous polynomials of degree $m$ in the variables $x_1, \dots, x_n$. Then we can identify $(\Omega^k(n))_m$ with $S^m(\C^n)\otimes \displaywedge^k(\C^n)$ which decomposes, as an $\mathfrak{sl}_n$-module, for $0<k<n$, $m>0$, as follows (see, e.g. \cite[Table 5, line 4]{OV}):
\[S^m(\C^n)\otimes \displaywedge^k(\C^n)=V(m\lambda_1)\otimes V(\lambda_k)=V(m\lambda_1+\lambda_k)\oplus V((m-1)\lambda_1+\lambda_{k+1}),\]
where $\lambda_1, \dots, \lambda_n$ are the fundamental weights of $\mathfrak{sl}_n$ and $V(\lambda)$ denotes the highest weight module for 
$\mathfrak{sl}_n$ of weight $\lambda$.

For $\omega\in (\Omega^k(n))_m$
we set
\[{\textstyle \int}\omega=\frac{1}{k+m}i_E\omega,\]
where $E=\sum_{j}x_j\de_j$ is the Euler operator and $i_E\omega$ denotes the contraction of the form $\omega$ along the vector field $E$. Note that $\int$ defines a linear map $(\Omega^k(n))_m\rightarrow (\Omega^{k-1}(n))_{m+1}$ for $k\geq 1$.

\begin{proposition} \label{dint}Let $0<k<n$ and $m>0$. Then
\begin{itemize}
\item[a)] The $\mathfrak{sl}_n$-submodule $V(m\lambda_1+\lambda_k)$ of $(\Omega^k(n))_m$ is the image of the differential map $d:(\Omega^{k-1}(n))_{m+1}\rightarrow (\Omega^k(n))_m$.
\item[b)] The $\mathfrak{sl}_n$-submodule
$V((m-1)\lambda_1+\lambda_{k+1})$ of $(\Omega^k(n))_m$ is the image of the map ${\textstyle \int}:(\Omega^{k+1}(n))_{m-1}\rightarrow (\Omega^k(n))_m$.
\item[c)] The maps $d$ and $\int$ obey the following relations:
\[
d^2=0,\,\,\textstyle{\int}^2=0,\,\,d\textstyle{\int}+\textstyle{\int} d=1.
\]
\item [d)] If $d\omega=0$ (respectively $\int \!\omega=0$) then $d\int \!\omega=\omega$ (respectively $\int\! d\omega=\omega)$. 

\end{itemize}
\end{proposition}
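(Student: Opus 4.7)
The plan is to prove (c) first, deduce (d) as an immediate consequence, and then derive (a) and (b) by combining the resulting homotopy identity with the representation-theoretic decomposition cited just before the proposition. The conceptual core lies in (c); everything else is formal.

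For (c), the identity $d^2 = 0$ is standard. For $\int^2 = 0$ I would note that on each bihomogeneous piece $\int$ is a nonzero scalar multiple of $i_E$, so $\int^2$ is a scalar multiple of $i_E^2 = 0$ (contraction with any vector field is an odd derivation of square zero). The heart of the matter is the homotopy identity $d\int + \int d = 1$, which I would deduce from Cartan's magic formula $L_E = d\, i_E + i_E\, d$ applied to the Euler field $E = \sum_j x_j \de_j$. The key observation is that $L_E$ acts as the scalar $k+m$ on $(\Omega^k(n))_m$, and that in the two relevant bidegrees the normalization $\frac{1}{k+m}$ built into $\int$ produces exactly this same factor both in $d\int$ and in $\int d$ — because $\int : (\Omega^{k+1}(n))_{m-1} \to (\Omega^k(n))_m$ and $\int : (\Omega^k(n))_m \to (\Omega^{k-1}(n))_{m+1}$ are each $\frac{1}{k+m}\, i_E$. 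Dividing Cartan's identity by $k+m$ yields the claim. Part (d) then follows at once: if $d\omega = 0$ then $\omega = d\int\omega + \int d\omega = d\int\omega$, and analogously if $\int\omega = 0$.

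For (a) and (b), both $d$ and $\int$ are manifestly $\mathfrak{sl}_n$-equivariant, so their images in $(\Omega^k(n))_m$ are $\mathfrak{sl}_n$-submodules. Using (c) I would first establish the vector-space decomposition $(\Omega^k(n))_m = \operatorname{Im}(d) \oplus \operatorname{Im}(\int)$: the identity $\omega = d\int\omega + \int d\omega$ gives the sum, while if $\omega = d\eta = \int\zeta$ lies in both images then $d\omega = 0$ and $\int\omega = \int^2\zeta = 0$, forcing $\omega = d\int\omega + \int d\omega = 0$. Since the ambient module decomposes as $V(m\lambda_1 + \lambda_k) \oplus V((m-1)\lambda_1 + \lambda_{k+1})$ into two non-isomorphic irreducibles, each image must coincide with one of the two summands. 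To decide which, I would compute $d$ on the vector $x_1^{m+1}\, dx_2 \wedge \cdots \wedge dx_k \in (\Omega^{k-1}(n))_{m+1}$: the result $(m+1)\, x_1^m\, dx_1 \wedge \cdots \wedge dx_k$ is the standard highest weight vector of weight $m\lambda_1 + \lambda_k$, so $\operatorname{Im}(d) = V(m\lambda_1 + \lambda_k)$, and consequently $\operatorname{Im}(\int) = V((m-1)\lambda_1 + \lambda_{k+1})$ by elimination.

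The main obstacle is thus concentrated in (c), and even there the only subtle point is bookkeeping: one must verify that the normalization chosen for $\int$ causes the scalar $(k+m)$ to cancel uniformly in both $d\int$ and $\int d$ on every bigraded piece. Once this is checked, the remainder of the argument is Schur's lemma together with the production of a single explicit highest-weight vector.
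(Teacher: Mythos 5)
Your proposal is correct and follows essentially the same route as the paper: Cartan's formula together with the $\tfrac{1}{k+m}$ normalization bookkeeping gives (c), (d) is formal, and (a)--(b) follow from the $\mathfrak{sl}_n$-equivariance of $d$ and $\int$, the decomposition into the two non-isomorphic irreducible components, and an explicit highest weight vector such as $x_1^m\,dx_1\wedge\cdots\wedge dx_k$. The one point you leave implicit --- that neither image is $0$ nor all of $(\Omega^k(n))_m$, which follows for instance from $\int(x_1^{m-1}dx_1\wedge\cdots\wedge dx_{k+1})\neq 0$ --- is left equally implicit in the paper's own argument.
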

\begin{proof}
	We first prove c). Since $i_Xi_Y=-i_Yi_X$ for all vector fields $X,Y$ we have $i_E^2=0$ and hence $\int^2=0$. Moreover, by Cartan's formula, if $\omega\in (\Omega^k(n))_m$, we have $(d\int + \int d)(\omega)=\frac{1}{k+m}L_E(\omega)=1$.
	
	Note that the closed form $x_1^m dx_1\wedge\cdots \wedge dx_k$ is a highest weight vector of weight $m\lambda_1+\lambda_k$ and hence the component $V(m\lambda_1+\lambda_k)$ is given by the space of closed forms and a) follows.
	
	By c) we have $\int=\int d \int$ and $d \int d=d$ so $d$ and $\int$ are inverses to each other between the image of $d$ in $\Omega^{k+1}(n)_{m-1}$ and the image of $\int$ in $\Omega^k(n)_m$. In particular, by a), we have that the image of $\int$ is the other component $V((m-1)\lambda_1+\lambda_{k+1})$, with highest weight vector $\int(x_1^{m-1}dx_1\wedge \cdots \wedge dx_{k+1})$, whence b). The statement in d) follows immediately from c).
\end{proof}
The following diagram is an illustration of Proposition \ref{dint}:\vspace{5mm}

\begin{tikzcd}[column sep=tiny]
	(\Omega^{k+1}(n))_{m-1}=&V((m-1)\lambda_1+\lambda_{k+1}) \arrow[rrd, bend left=5,"\int" near end ]  & \bigoplus & V((m-2)\lambda_1+\lambda_{k+2})  \\ \hspace{8mm}(\Omega^k(n))_m=&V(m\lambda_1+\lambda_k) \arrow[rrd, bend left=5,"\int" near end ]  & \bigoplus & V((m-1)\lambda_1+\lambda_{k+1})  \arrow[llu,bend left=10, near end,"d"]\\ 
(\Omega^{k-1}(n))_{m+1}=&	V((m+1)\lambda_1+\lambda_{k-1}) & \bigoplus& V(m\lambda_1+\lambda_k).  \arrow[llu,bend left=10, near end,"d"] 
\end{tikzcd}

\vspace{5mm}
Now we recall the construction of the Lie superalgebra $E(1,6)$. This is slightly different but equivalent to that in \cite{CK}. The even part of $E(1,6)$ is 
\[E(1,6)_{\bar{0}}=W_1\oplus (\mathfrak{sl}_4\otimes\C[[t]])\,\,{\mbox{(direct sum of Lie algebras),}}\]
where $W_1=\C[[t]]\partial_t$,
% and $\mathfrak{sl}_4$ is the Lie algebra of $0$-divergence vector fields in the indeterminates $x_1,\dots, x_4$, 
and its odd part is 
%the unique nontrivial extension of $E(1,6)_{\bar{0}}$-modules such that
\[ E(1,6)_{\bar{1}}=(S^2(\C^4)\otimes{\Omega^1(1)}^{-\frac{1}{2}})\oplus (\displaywedge^2(\C^4)\otimes{\Omega^1(1)}^{-\frac{3}{2}})\,{\mbox{(direct sum of vector spaces)}}.
\]
%where $\Omega^1(1)^{\lambda}$ is the vector space of 1-forms in the variable $t$=\C[[t]]dt^{\lambda+1}$.
The action of $E(1,6)_{\bar{0}}$ on $ S^2(\C^4)\otimes{\Omega^1(1)}^{-\frac{1}{2}}$ and the action of $W_1$ on $ \inlinewedge^2(\C^4)\otimes{\Omega^1(1)}^{-\frac{3}{2}}$ are the obvious ones (thinking $\Omega^1(1)^\lambda$ as the $W_1$-module of 1-forms in the variable $t$ described in Section \ref{preliminary}), and the remaining brackets are described below.
%he bracket on the first summand of  $E(1,6)_{\bar{1}}$ is zero, and on the second summand is defined below.

In what follows, for reasons that will be apparent in the following sections, we identify $\C^4$ with $\sum_{i=2}^5\C x_i$, so that $S^2\C^4$ is identified with the space of quadratic forms in $x_2,\dots, x_5$, $\inlinewedge^2(\C^4)$ is identified with the space of differential 2-forms $\sum_{i,j=2}^5a_{ij}dx_i\wedge dx_j$,
where $a_{ij}=-a_{ji}\in \C$, and $\mathfrak{sl}_4(\C)$ is identified with the space of zero divergence vector fields $\sum_{i,j=2}^5b_{ij}x_i\partial_j$.
Then the bracket between an element $X\otimes f\in \mathfrak{sl}_4\otimes \C[[t]]$ and an element $\omega\otimes gdt\in \inlinewedge^2(\C^4)\otimes{\Omega^1(1)}^{-\frac{3}{2}}$ is
\[[X\otimes f,\omega\otimes gdt]={\textstyle \int} (i_X\omega)\otimes f'gdt+L_X\omega\otimes fgdt\in (S^2(\C^4)\otimes{\Omega^1(1)}^{-\frac{1}{2}})\oplus ( \displaywedge^2(\C^4)\otimes{\Omega^1(1)}^{-\frac{3}{2}}).\]
%where $i_X\omega$ denotes the contraction of the form $\omega$ along the field $X$. Here we identified the space of 1-forms $\sum a_{kj}x_kdx_j$ with $\C^4\otimes \C^4$ and denoted by $s(i_X\omega)$ the  symmetrization of $i_X\omega$ in $S^2(\C^4)$ (i.e., $s(x_kdx_j)=\frac{1}{2}x_kx_j$ for all $j,k=2,3,4,5$).
%\color{blue}
%Alternatively, we can define the term $s(i_X\omega)$ in the following way. Let 
%\[
%\int x_i d_{jhk}=\frac{1}{4}x_i(x_jd_{hk}+x_hd_{kj}+x_kd_{jh}),
%\]
%we extend $\int$ to a linear operator on the space of differential 3-forms $\langle x_i d_{jhk}:\, i,j,h,k=1,2,3,4\rangle$ and we observe that if $\Omega\in \langle x_i d_{jhk}:\, i,j,h,k=1,2,3,4\rangle$ is closed then $d(\int \Omega)=\Omega$. If we identify  $\mathfrak{sl}_4$ with closed forms in $\langle x_i d_{jhk}\rangle:\, i,j,h,k=1,2,3,4$ then for all $X\in \mathfrak {sl}_4$ and all $\omega\in \inlinewedge ^2(\C^4)$ we have
%\[
%s(\iota_X\omega)= 2\omega \wedge \textstyle{\int} (\iota_X d_{2345}),
%\]
%where we also identify $p\otimes d_{2345}$ with $p\in S^2(\C^4)$.
%For example, if $X=x_2\de_2-x_3\de_3$ and $\omega=d_{34}$ we have
%\[
%s(\iota_X(\omega))=s(-x_3d_4)=-\frac{1}{2}x_3x_4
%\]
%and
%\[2 \textstyle{\int} X \wedge\omega=\frac{1}{2} (x_2(x_3d_{45}+x_4d_{53}+x_5d_{34})+x_3(x_2d_{45}+x_5d_{24}+x_4 d_{52}))\wedge d_{34}=\frac{1}{2} x_3x_4 d_{5234}\cong -\frac{1}{2} x_3x_4
%\]
%\color{black}
 The bracket between two odd elements is,
for $p, q\in S^2(\C^4)$, $\omega, \sigma\in\inlinewedge^2(\C^4)$ and $f,g\in \C[[t]]$,
\begin{align*}[p\otimes fdt, q\otimes gdt]&=0;\\
	[p\otimes fdt, \omega\otimes gdt]&=-(dp\wedge\omega)\otimes fg\in \mathfrak{sl}_4\otimes \C[[t]];\\
	[\sigma\otimes fdt, \omega\otimes gdt]&=fg(\sigma\wedge\omega) +\frac{1}{2}\big({\textstyle \int} \sigma\wedge \omega - \sigma \wedge {\textstyle \int} \omega\big)\otimes (f'g-fg')\in W_1\oplus (\mathfrak{sl}_4\otimes \C[[t]]),
\end{align*}
%a(\phi(\sigma),\phi(\omega^*))_0\otimes (f'g-fg')\in W_1\oplus \mathfrak{sl}_4\otimes \C[[t]]\color{black\color{red} Il termine aggiunto. $\phi:\inlinewedge^2(\C^4)\rightarrow \mathfrak{sl}_4$ is given by $\phi(d_{ij})=x_i\de_j-x_j\de_i$. The $*$ denotes the Hodge dual, i.e. $d_{ij}^*=d_{hk}$ where $\epsilon_{ijhk}=1$. $a$ \'e l'andata del bracket tra due campi. Infine, se $X\in \mathfrak{gl}_4$ allora $X_0=X-\frac{1}{4}\diver(X)\sum x_k \de_k\in \mathfrak {sl}_4$. Si pu\'o scrivere meglio?
%where $\int d_{ij}=\frac{1}{2}(x_id_j-x_j d_i)$ for all $i,j=2,3,4,5$ and 
where we identified closed differential 3-forms in 4 variables $x_2, \dots, x_5$
(resp.\  differential 4-forms in 5 variables $t, x_2, \dots, x_5$) with zero-divergence vector fields in the same variables (resp.\ with vector fields in one variable) through contraction with the  standard volume form $dx_2\wedge dx_3\wedge dx_4\wedge dx_5$ (resp.\ $dt\wedge dx_2\wedge dx_3\wedge dx_4\wedge dx_5$).

If we set
$z=2t\partial_t$, then $\ad(z)$ defines on $L$
an irreducible, consistent $\Z$-grading 
of depth 2 that 
%\begin{align*}
%\deg t=2, \, & \deg u=0\,\,{\mbox{for every}}\,\, u\in \mathfrak{sl}_4, & \\
% \deg p=-1\, {\mbox{for every}}\,\,\, p\in S^2(\C^4),\, &
%\deg w=1  \,\,{\mbox{for every}}\,\, w\in \Lambda^2(\C^4), &\deg d=-2.
%\end{align*}
is called the principal grading of $E(1,6)$ (i.e., $x\in E(1,6)$ has degree $d$ if $\ad(z)(x)=dx$). In this grading $E(1,6)_0\cong \mathfrak{sl}_4\oplus \C$ and $E(1,6)_{-1}\cong \inlinewedge^2(\C^4)$, $E(1,6)_{-2}\cong\C$,
$E(1,6)_1\cong S^2(\C^4)\oplus (\inlinewedge^2(\C^4))^*$, as $E(1,6)_0$-modules.

\section{Embedding the Lie superalgebra $E(1,6)$ into $E(5,10)$}

Consider $\g=E(5,10)$ with the $\Z$-grading of type $(0,1,1,1,1)$, i.e.,
\[\deg x_1=\deg \de_{1}=0,\, \deg x_i=-\deg \de_{i}=1 \, \mbox{for}\,
i=2,\dots,5, \, \deg d=-1.\]
Then $\g=\prod_{i\geq -1} \g_i$ is a $\Z$-graded Lie superalgebra of depth 1 by
closed subspaces $\g_i$, and the subalgebra
$\g_0$ is isomorphic to $E(1,6)$ (see \cite{CK}). In Theorem \ref{g00}
below we give a precise description of this isomorphism. We have
\begin{align*}(\g_0)_{\bar{0}}=&\{X\in\langle \partial_{1}, x_i\partial_{j}\,|\,  i,j=2,3,4,5\rangle\otimes\C[[x_1]]\,|\, \diver(X)=0\},\\
	(\g_0)_{\bar{1}}=& \langle x_id_{1j}+x_jd_{1i}\,|\,  i,j=2,\dots, 5\rangle\otimes\C[[x_1]]\\
	&+\langle fd_{ij}+\frac{1}{2}f' (x_jd_{i1}-x_id_{j1})\,|\, i,j=2,\dots, 5, f\in\C[[x_1]]\rangle.
\end{align*}
%\cong E(1,6)_{\bar{1}}\cong S^2(\C^4)\otimes\C[x_1]dx_1^{1/2}+\Lambda^2(\C^4)\otimes\C[x_1]dx_1^{-1/2}
We recall the description of $E(1,6)$ from Section \ref{preliminary} as \[E(1,6)=W_1\oplus (\mathfrak{sl}_4\otimes \mathbb C[[t]])\oplus (S^2(\C^4)\otimes{\Omega^1(1)}^{-\frac{1}{2}})\oplus (\displaywedge^2(\C^4)\otimes{\Omega^1(1)}^{-\frac{3}{2}})\]
and define a linear map $\psi:E(1,6)\rightarrow \g_0$ in the following way.
\begin{enumerate}
	\item For all  $f(t)\de_t\in W_1$ we let
	\[\psi(f(t)\de_t)=f(x_1)\de_1-\frac{1}{4}f'(x_1)\sum_{k=2}^5 x_k \de_k;\]
	\item for all $X\otimes f(t)\in \mathfrak{sl}_4 \otimes \C[[t]]$ we let
	\[\psi(X\otimes f(t))= f(x_1) X;
	\]
	\item for all $x_ix_j\otimes f(t)dt\in S^2(\C^4)\otimes{\Omega^1(1)}^{-\frac{1}{2}}$ we let
	\[
	\psi(x_ix_j\otimes f(t)dt)=f(x_1)(x_id_{1j}+x_jd_{1i})=-d(f(x_1)x_ix_j d_1);
	\]
	\item for all $d_{ij}\otimes f(t)dt\in \inlinewedge^2(\C^4)\otimes \Omega^1(1)^{-3/2}$ we let
	\[
	\psi (d_{ij}\otimes f(t)dt)=f(x_1)d_{ij}+\frac{1}{2} f'(x_1)\,(x_i d_{1j}-x_jd_{1i})=\frac{1}{2}d\big(f(x_1)(x_id_j-x_j d_i)\big).
	\]
\end{enumerate}
The following theorem holds (see also \cite{SurThe}).

\begin{theorem}\label{g00}
	The map $\psi: E(1,6)\rightarrow \g_0$ defined above is a Lie superalgebra isomorphism.
\end{theorem}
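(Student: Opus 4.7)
The proof proceeds in three stages: (i) $\psi$ maps into $\g_0$; (ii) $\psi$ is bijective; and (iii) $\psi$ preserves brackets.

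For (i), the vector field in (1) has vanishing divergence because $-\tfrac14 f'(x_1)\cdot 4$ cancels $\de_1 f(x_1)$; in (2), $f(x_1)X$ remains divergence-free since $X\in\mathfrak{sl}_4$ and $f$ depends only on $x_1$. In (3)--(4) the images are written as exact 2-forms, hence closed, and a direct comparison with the spanning sets of $(\g_0)_{\bar 1}$ given above identifies them. For (ii), every element of $(\g_0)_{\bar 0}$ decomposes uniquely as $a(x_1)\de_1+Z$ with $Z\in\sum_{i,j\geq 2}\C[[x_1]]x_i\de_j$; the zero-divergence condition forces the trace of $Z$ to be $-a'(x_1)$, which is exactly what is provided by $\psi$ of $W_1$ together with $\mathfrak{sl}_4\otimes\C[[t]]$. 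Bijectivity on $(\g_0)_{\bar 1}$ is read off directly from the basis description.

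For (iii), one checks the ten pairwise brackets between the four summands of $E(1,6)$. The even-even cases are routine; the key identities are $[\de_1,E']=0$ and $[E',X]=0$ for $X\in\mathfrak{sl}_4$, where $E':=\sum_{k\geq 2}x_k\de_k$, so that the correction $-\tfrac14 f'(x_1)E'$ in $\psi(f\de_t)$ is compatible with the rest of the algebra. The even-odd brackets crucially exploit that $\psi$ sends odd elements to \emph{exact} 2-forms: since Lie derivative commutes with $d$, the verification reduces to computing the Lie derivative of a 1-form $f(x_1)p\,dx_1$ or $f(x_1){\textstyle\int}\sigma$, together with the identities $L_{fX}=fL_X+df\wedge i_X$ and ${\textstyle\int}L_X=L_X{\textstyle\int}$ for linear $X$ (the latter because $[X,E]=0$).

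The heart of the proof is the odd-odd case. All images involve only 2-forms $d_{ab}$ where $a=1$ or both $a,b\geq 2$, so in the $E(5,10)$-bracket formula any term with a repeated index $1$ vanishes automatically; this makes $[\psi(p\otimes fdt),\psi(q\otimes gdt)]=0$ immediate. In $[p\otimes fdt,\omega\otimes gdt]$, only the surviving mixed terms contribute, and reorganizing them through the identification of closed 3-forms in $x_2,\dots,x_5$ with zero-divergence linear vector fields via contraction with $dx_2\wedge\cdots\wedge dx_5$ reproduces $-(dp\wedge\omega)\otimes fg$. The hardest case is $[\sigma\otimes fdt,\omega\otimes gdt]$: writing $\psi(\sigma\otimes fdt)=d(f{\textstyle\int}\sigma)=f\sigma+f'\,d_1\wedge{\textstyle\int}\sigma$ (using Proposition \ref{dint}(d), since $\sigma$ is closed) and similarly for $\omega$, the bracket decomposes into three kinds of pairings: $[f\sigma,g\omega]$, which produces only $\de_1$-contributions (the $W_1$-component corresponding to $fg(\sigma\wedge\omega)$); the mixed pairings $[f\sigma,g'd_1\wedge{\textstyle\int}\omega]$ and $[f'd_1\wedge{\textstyle\int}\sigma,g\omega]$, producing linear vector fields in $x_2,\dots,x_5$; and $[f'd_1\wedge{\textstyle\int}\sigma,g'd_1\wedge{\textstyle\int}\omega]$, which vanishes because the index $1$ is repeated.

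The main obstacle is the coefficient- and sign-bookkeeping in this last case: one must coordinate the signs $\varepsilon_{ijhk}$ of $E(5,10)$ with the signs coming from the identification of 3-forms with linear vector fields via $i_X(dx_2\wedge\cdots\wedge dx_5)$, and verify that the trace part of the mixed pairings, combined with the $E'$-correction from $\psi(fg\de_t)$, exactly reproduces the prescribed $\mathfrak{sl}_4\otimes\C[[t]]$-component $\tfrac12\big({\textstyle\int}\sigma\wedge\omega-\sigma\wedge{\textstyle\int}\omega\big)(f'g-fg')$. By linearity it suffices to check basis elements $\sigma=d_{ij},\omega=d_{hk}$ with $i,j,h,k\geq 2$, splitting into sub-cases by $|\{i,j,h,k\}|$, which renders the bookkeeping finite and tractable.
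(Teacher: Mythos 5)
Your proposal is correct and follows essentially the same route as the paper: first the super-vector-space isomorphism via the decomposition $X=(f_1\de_1-\tfrac14 f_1'\sum_{k\geq 2}x_k\de_k)+gY$ of divergence-free fields, then a case-by-case verification of $[\psi(\alpha),\psi(\beta)]=\psi([\alpha,\beta])$ over the four summands, reduced to explicit basis computations exactly as the paper does (including the vanishing of the $S^2$--$S^2$ bracket from the repeated index $1$, and the matching of the diagonal terms of the mixed pairings against the $-\tfrac14(fg)'\sum_{k\geq2}x_k\de_k$ correction in $\psi(fg\de_t)$ for the $\inlinewedge^2$--$\inlinewedge^2$ case). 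The structural shortcuts you add (exactness of the odd images, $L_Xd=dL_X$, ${\textstyle\int}L_X=L_X{\textstyle\int}$ for linear $X$) are sound but only repackage the same bookkeeping that the paper carries out directly.
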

\begin{proof}
	First note that $\psi$ is indeed an isomorphism of supervector spaces: this follows from the observation that $X\in (\g_0)_{\bar{0}}$ can be written as a 0-divergence vector field $X=\sum_{i=1}^5 f_i \de_i$ with $f_1\in \mathbb C[[x_1]]$.  Therefore $X=(f_1\de_1-\frac{1}{4}f'_1\sum_{k=2}^5 x_k \de_k)+(\frac{1}{4} f'_1\sum_{k=2}^5 x_k \de_k+\sum_{k=2}^5 f_k \de_k)=(f_1\de_1-\frac{1}{4}f'_1\,\sum_{k=2}^5 x_k \de_k)+gY$ for some $0$-divergence vector field $Y\in \mathfrak{sl}_4$  and some $g\in\C[[x_1]]$.
	
	Next we verify that $\psi$ is a homomorphism of Lie superalgebras, i.e.
	\begin{equation}\label{hom}
		[\psi(\alpha),\psi(\beta)]=\psi([\alpha,\beta])
	\end{equation}
	for all $\alpha,\beta\in E(1,6)$. If $\alpha$ and $\beta$ are both even Equation \eqref{hom} can be easily verified.
	
	%\[
	%\psi([f(t)\de_t,g(t)\de_t])=[\psi(f(t)\de_t),\psi(g(t)\de_t)]=(f(x_1)g'(x_1)-g(x_1)f'(x_1))\de_1-\frac{1}{4}(f(x_1)g''(x_1)-g(x_1)f''(x_1)) \sum_{k=2}^{5}x_k \de_k
	%\]
	%that \[\Psi([f(t)\de_t,X\otimes g(t)])=[\Psi(f(t)\de_t),\Psi(X\otimes g(t))]=f(x_1)g'(x_1) X\]
	%and that, clearly, $\Psi([X\otimes f(t),Y\otimes g(t)])=[\Psi(X\otimes f(t)),\Psi(Y\otimes g(t))]$.
	Let us verify  Equation \eqref{hom} if $\alpha$ is even and $\beta$ is odd. 
	\begin{itemize}
		\item Let $\alpha=f(t)\de_t\in W_1$ and $\beta=x_ix_j\otimes g(t)dt\in S^2(\C^4)\otimes{\Omega^1(1)}^{-\frac{1}{2}}$. We have
		\begin{align*}
			[\psi(f(t)\de_t),\psi(x_ix_j&\otimes g(t)dt)]=[f(x_1) \de_1-\frac{1}{4}f'(x_1)\sum_{k=2}^5 x_k \de_k,\,g(x_1)(x_id_{1j}+x_jd_{1i})]\\
			&=d(f(x_1)g(x_1)(x_id_j+x_jd_i))+\frac{1}{2}d(f'(x_1)g(x_1) x_i x_j d_1)\\
			&= \psi(x_ix_j\otimes \big(\de_t(f(t)g(t))-\frac{1}{2}f'(t)g(t)\big)dt)\\
			&=\psi([f(t)\de_t,x_ix_j\otimes g(t)dt]).
		\end{align*}
		
		\item Let $\alpha=f(t)\de_t\in W_1$ and $\beta=d_{ij}\otimes g(t)dt\in \inlinewedge^2(\C^4)\otimes \Omega^1(1)^{-3/2}$. We have
		\begin{align*}
			[\psi(f(t)\de_t),\psi(d_{ij}&\otimes g(t)dt)]=[f(x_1)\de_1-\frac{1}{4}f'(x_1) \sum_{k=2}^5 x_k \de_k, g(x_1) d_{ij} +\frac{1}{2}g'(x_1)(x_id_{1j}-x_j d_{1i})]\\
			&= \frac{1}{2}d(f(x_1)g'(x_1)(x_id_j-x_j d_i))-\frac{1}{4}d(f'(x_1)g(x_1)(x_id_j-x_j d_i))\\
			&= \psi(d_{ij}\otimes (f(t)g'(t)-\frac{1}{2}f'(t)g(t))dt)\\
			&= \psi([f(t)\de_t,d_{ij}\otimes g(t)dt]).
		\end{align*}
		\item Let $\alpha=x_h \de_k \otimes f(t)\in \mathfrak{sl}_4 \otimes \C[[t]]$ and $\beta=x_ix_j\otimes g(t)dt\in S^2(\C^4)\otimes{\Omega^1(1)}^{-\frac{1}{2}}$. We have
		\begin{align*}
			[\psi(x_h \de_k \otimes f(t)), \psi(x_ix_j &\otimes g(t) dt)]=[f(x_1)x_h \de_k, g(x_1)(x_i d_{1j}+x_j d_{1i})]\\
			&= \delta _{kj}(-d(f(x_1) g(x_1)x_hx_id_1)+\delta _{ki}(-d(f(x_1) g(x_1)x_hx_jd_1)\\
			&= \delta _{kj}\psi(x_hx_i\otimes f(t)g(t) dt)+\delta_{ki}\psi(x_hx_j\otimes f(t)g(t)dt)\\
			&=\psi([x_h \de_k\otimes f(t),x_ix_j \otimes g(t) dt]).
		\end{align*}
		The case $\alpha=x_h \de_h-x_k \de_k$ is similar and hence is omitted.
		\item 
		Let $\alpha=x_h \de_k \otimes f(t)\in \mathfrak{sl}_4 \otimes \C[[t]]$ and $\beta=d_{ij}\otimes g(t)dt\in \inlinewedge^2(\C^4)\otimes \Omega^1(1)^{-3/2}$. We can assume without loss of generality that $k\neq j$. We have
		\begin{align*}
			[\psi(x_h \de_k \otimes f(t)), \psi(d_{ij}&\otimes g(t)dt)]=[f(x_1)x_h \de_k,g(x_1) d_{ij} +\frac{1}{2}g'(x_1)(x_id_{1j}-x_j d_{1i})]\\
			&= \delta_{ik}(d(f(x_1)g(x_1)x_h d_j)+\frac{1}{2}d(f(x_1)g'(x_1)x_h x_j d_1))\\
			&=\delta_{ik}(f(x_1)g(x_1)d_{hj}+\frac{1}{2}\de_1(f(x_1)g(x_1)) (x_hd_{1j}-x_j d_{1h})\\
			&\hspace{5mm}+\frac{1}{2} f'(x_1) g(x_1)(x_h d_{1j}+x_j d_{1h}) )\\
			&= \delta_{ik}(\psi(d_{hj}\otimes f(t)g(t) dt)+\frac{1}{2} \psi(x_hx_j\otimes f'(t) g(t) dt)).
		\end{align*}
		On the other hand
		\begin{align*}
			[x_h \de_k \otimes f(t), d_{ij}\otimes g(t) dt]&=\delta_{ik}(d_{hj}\otimes f(t) g(t)dt+ {\textstyle \int} x_h d_j \otimes f'(t) g(t) dt)\\
			&= \delta_{ik}(d_{hj}\otimes f(t) g(t)dt+\frac{1}{2}x_hx_j \otimes  f'(t) g(t) dt).
		\end{align*}
		The case $\alpha=x_h \de_h-x_k \de_k$ is similar and hence is omitted.
	\end{itemize}
	
	Finally, we can verify Equation \eqref{hom} in the case where both $\alpha$ and $\beta$ are odd.
	\begin{itemize}
		\item If $\alpha,\beta\in S^2(\C^4)\otimes{\Omega^1(1)}^{-\frac{1}{2}}$ we can observe that $	[\psi(\alpha),\psi(\beta)]=\psi([\alpha,\beta])=0$.
		\item  Let $\alpha=x_hx_k\otimes f(t) dt\in  S^2(\C^4)\otimes{\Omega^1(1)}^{-\frac{1}{2}}$ and $\beta= d_{ij}\otimes g(t) dt \in  \inlinewedge^2(\C^4)\otimes \Omega^1(1)^{-3/2}$. We first assume that $i,j,h,k$ are distinct and that $\varepsilon_{1ijhk}=1$. We have
		\begin{align*}
			[\psi(x_hx_k\otimes f(t) dt),\psi(d_{ij}&\otimes g(t) dt)]=[f(x_1)(x_hd_{1k}+x_k d_{1h}),g(x_1)d_{ij}+\frac{1}{2} g'(x_1)(x_i d_{1j}-x_j d_{1i})]\\
			&=f(x_1)g(x_1)(-x_h \de_h+x_k \de_k)\\
			&=\psi((-x_h \de_h+x_k \de_k)\otimes f(t)g(t)).
		\end{align*}
		On the other hand 
		\[
		[x_hx_k\otimes f(t)dt,d_{ij}\otimes g(t)dt]=(x_kd_h+x_hd_k)\wedge d_{ij}\otimes f(t) g(t)=(-x_k\de_k+x_h \de_h)\otimes f(t) g(t)
		\]
		If $i,j,h,k$ are not distinct the computation is similar and simpler and is therefore omitted.
		\item Let $\alpha=d_{ij}\otimes f(t) dt, \beta=d_{hk}\otimes g(t)dt \in \inlinewedge(\mathbb C^4)\otimes \Omega^1(1)^{-\frac{3}{2}}$. We assume $\epsilon(1ijhk)=1$.  We have
		\begin{align*}
			[\psi(d_{ij}\otimes& f(t) dt),\psi(d_{hk}\otimes g(t)dt)]=\\&=[f(x_1)d_{ij}+\frac{1}{2}f'(x_1)(x_id_{1j}-x_j d_{1i}),g(x_1) d_{hk}+\frac{1}{2}g'(x_1)(x_h d_{1k}-x_k d_{1h})]\\
			&=f(x_1)g(x_1)\de_1-\frac{1}{2}f(x_1)g'(x_1)(x_h\de_h+x_k \de_k)-\frac{1}{2}f'(x_1)g(x_1)(x_i\de_i+x_j \de_j)\\
			&=f(x_1)g(x_1)\de_1 -\frac{1}{4}\de_1(f(x_1)g(x_1))\sum_{r=2}^5 x_r\de_r\\
			&\,\,+\frac{1}{4}(f(x_1)g'(x_1)-f'(x_1)g(x_1))(x_i\de_i+x_j \de_j-x_h \de_h-x_k \de_k)\\
			&=\psi(f(t)g(t)\de_t)+\psi((x_i\de_i+x_j \de_j-x_h \de_h-x_k \de_k)\otimes \frac{1}{4}(f(t)g'(t)-f'(t)g(t))).
		\end{align*}
		On the other hand
		\begin{align*}
			[d_{ij}\otimes f(t) dt,d_{hk}&\otimes g(t)dt]=\\
			&= f(t) g(t) \de_t+\frac{1}{2}\big(\textstyle{\int} d_{ij}\wedge d_{hk}-d_{ij}\wedge \textstyle{\int} d_{hk}\big)\otimes (f'(t)g(t)-f(t)g'(t))\\
			&= f(t) g(t) \de_t+\frac{1}{4}(x_i\de_i+x_j\de_j-x_h\de_h -x_k \de_k )\otimes (f(t)g'(t)-f'(t)g(t)).
		\end{align*}
	\end{itemize} 
\end{proof}
\color{black}

In the embedding of $L=E(1,6)$ into $\g=E(5,10)$ described in Theorem \ref{g00} the grading element  of $L$ in its principal grading is $z=2x_1\partial_{1}-\frac{1}{2}\sum_{i=2}^5x_i\partial_{i}$: this corresponds to letting
\[\deg x_1=-\deg\de_1=2,\,\deg x_i=-\deg\de_i=1\,\,{\mbox{for}}\,\,i=2,3,4,5,\,\,{\mbox{and}}\,\,\deg d=-\frac{3}{2};\]
 (it is the $\Z$-grading of type
$(2,1,1,1,1)$ of $E(5,10)$). It follows that inside $E(5,10)$ the non-positive part $L_{\leq 0}$ of $E(1,6)$, with respect to its principal grading, is the following:

\[L_{<0}=\langle \partial_{x_1}, d_{ij}\,|\, i,j=2,3,4,5, i<j\rangle,\]
\[L_0=\langle z, x_i\partial_{j}, x_i\partial_{i}-x_{i+1}\partial_{i+1}\, |\, i,j=2,3,4, i\neq j\rangle.\]

Since $\g_0$ is isomorphic to $E(1,6)$, each $\g_j$ is an $E(1,6)$-module.
We have:
\[\g_{-1}=\langle \partial_{i}\,|\, i=2, \dots, 5\rangle\otimes\C[[x_1]]+\langle d_{1j}\,|\,j=2,\dots,5\rangle\otimes\C[[x_1]],\]
and, for $r\geq 1$,
\begin{align*}
(\g_r)_{\bar{0}} = \{ & X\in \langle p\partial_{1},
q\partial_{i}\rangle\otimes \C[[x_1]]\,|\, i=2, \dots,5, p,q\in\C[x_2,\dots, x_5],
 \deg(p)=r,\\
 &\deg(q)=r+1, div(X)=0\};
 \end{align*}
\begin{align*} 
(\g_r)_{\bar{1}}= \{ &\omega\in \langle pd_{ij},
qd_{1j}\rangle\otimes \C[[x_1]]\,|\, i,j=2, \dots,5, p,q\in\C[x_2,\dots, x_5], \deg(p)=r, \\
& \deg(q)=r+1, d\omega=0\}.
\end{align*}
 
\begin{proposition}\label{generated}
The $\g_0$-module $\g_r$ is generated by the vector $v_r=x_2^r\partial_{1}$ if $r\geq 0$ and by the vector $v_{-1}=\partial_{5}$ if $r=-1$.
\end{proposition}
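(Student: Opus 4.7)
The plan is to show that for each $r$ the $\g_0$-submodule $M_r:=U(\g_0)\cdot v_r$ of $\g_r$ coincides with $\g_r$, by direct analysis of the bracket formulas using the embedding $\psi:E(1,6)\hookrightarrow \g_0$ from Theorem~\ref{g00}. The case $r=0$ is immediate: $v_0=\partial_1=\psi(\partial_t)$ is a nonzero element of the simple Lie superalgebra $\g_0\cong E(1,6)$, whose adjoint module is irreducible, so $v_0$ generates $\g_0$.

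For $r=-1$, I would exhibit the spanning elements of $\g_{-1}$ inside $M_{-1}$ as follows. Bracketing $v_{-1}=\partial_5$ with the $\mathfrak{sl}_4$-elements $\psi(x_5\partial_i)=x_5\partial_i$ produces $-\partial_i$ for $i=2,3,4$; subsequent brackets with $\psi(f(t)\partial_t)=f(x_1)\partial_1-\tfrac14 f'(x_1)\sum_{k=2}^5 x_k\partial_k$ yield arbitrary $\C[[x_1]]$-multiples $g(x_1)\partial_i$; and odd brackets with $\psi(x_5x_j\otimes dt)=x_5d_{1j}+x_jd_{15}$ produce the closed $2$-forms $d_{1j}$ via Lie derivative (using $\psi(x_5^2\otimes dt)$ for $j=5$). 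A final application of $\psi(f(t)\partial_t)$ furnishes arbitrary $\C[[x_1]]$-dependence in the form-direction, exhausting $\g_{-1}$.

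For $r\geq 1$ the argument proceeds in stages. First, the $\mathfrak{sl}_4$-action sweeps $v_r=x_2^r\partial_1$ over the irreducible $\mathfrak{sl}_4$-module $S^r(\C^4)\cdot\partial_1$, producing all $p(x_2,\dots,x_5)\partial_1$ with $p$ homogeneous of degree $r$. Next, brackets with $\psi(f(t)\partial_t)$ and with $\psi(X\otimes f(t))=f(x_1)X$ (for $X\in\mathfrak{sl}_4$) propagate the $\C[[x_1]]$-dependence; the crucial identity is that $[\psi(f\partial_t),p\partial_1]$ is a specific combination of $f'(x_1)p\partial_1$ and $f''(x_1)p\sum_{k=2}^5 x_k\partial_k$, so an induction on the $x_1$-polynomial degree produces the full $\partial_1$-type even component together with the divergence-compensating $q\partial_i$-pieces. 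Finally, brackets with the odd generators $\psi(x_ix_j\otimes f(t)dt)$ and $\psi(d_{ij}\otimes f(t)dt)$ yield, via Lie derivative, all odd elements of $\g_r$ (the closed $2$-forms of the prescribed bidegrees), and cross-brackets among these and back with even $\g_0$ close the loop.

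The main obstacle will be the bookkeeping in the step $r\geq 1$: one must verify that the iterated brackets do exhaust every $\mathfrak{sl}_4$-isotypic component of $\g_r$ at every $x_1$-polynomial degree, respecting the closedness and divergence-zero conditions. A convenient tactic is, for each irreducible $\mathfrak{sl}_4$-summand of $\g_r$, to exhibit an explicit highest-weight vector reachable from $v_r$ by the above chain of brackets, and then to invoke $\mathfrak{sl}_4$-equivariance to fill in the rest of that summand.
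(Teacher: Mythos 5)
Your cases $r=0$ and $r=-1$ are fine: for $r=0$ the irreducibility of the adjoint module of the simple superalgebra $\g_0\cong E(1,6)$ suffices, and your explicit computation for $r=-1$ works, although the paper disposes of that case in one line by observing that $\g_{-1}$ is an irreducible $\g_0$-module because $\g$ is simple and $\Z$-graded of depth $1$.

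The genuine gap is the case $r\geq 1$, which is where all the content of the statement lies and which your proposal leaves as ``bookkeeping''. Two decisive steps are missing and are not routine. For the even part, one must show that the submodule generated by $v_r$ contains the whole summand $(S_4)_r\otimes\C[[x_1]]$, i.e.\ the fields $x_1^bY$ with $Y$ divergence-free in $x_2,\dots,x_5$; your identities only produce coupled combinations such as $[x_1^aX,p\partial_1]=x_1^aX(p)\partial_1-ax_1^{a-1}pX$ and $[\psi(f\partial_t),p\partial_1]=-(1+\tfrac r4)f'p\partial_1+\tfrac14 f''p\sum_{k=2}^5x_k\partial_k$, and you never explain how to decouple them. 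The paper does this by the specific choice $X=x_3\partial_4$, which annihilates $x_2^r$ and gives $[x_1^kx_3\partial_4,v_r]=-kx_1^{k-1}x_2^rx_3\partial_4$, then invokes irreducibility of $(S_4)_r$ as an $\mathfrak{sl}_4$-module, and only afterwards treats the quotient $(\g_r)_{\bar0}/\bigl((S_4)_r\otimes\C[[x_1]]\bigr)\cong S^r(\C^4)\otimes W_1$ using $[x_1^kx_3\partial_2,v_r]$. For the odd part your proposal simply asserts that Lie-derivative brackets ``yield all odd elements of $\g_r$''. What one actually reaches is $\Omega^2_r(4)_{cl}$ (via $[x_1d_{23}+x_2d_{13},v_r]=-(r+1)x_2^rd_{23}$ and $\mathfrak{sl}_4$-irreducibility) and then the forms $[x_1^aY,\omega]=d(x_1^a i_Y\omega)$; these exhaust $(\g_r)_{\bar1}$ only because every $1$-form of degree $r+1$ in $x_2,\dots,x_5$ is a contraction $i_Y\omega$ of a closed $2$-form of degree $r$ along some $Y\in\mathfrak{sl}_4$ --- a surjectivity lemma the paper proves by an explicit construction and which your outline does not even identify as needed --- together with the observation that any closed form in $(\g_r)_{\bar1}$ decomposes as $\alpha_0+\sum_b\tfrac{1}{b+1}d(x_1^{b+1}\beta_b)$ with $\alpha_0\in\Omega^2_r(4)_{cl}$. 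Also, ``cross-brackets among these'' cannot help: $[\g_r,\g_r]\subseteq\g_{2r}$, so such brackets leave $\g_r$. In short, your plan is in the same spirit as the paper's proof (exploit $\mathfrak{sl}_4$-irreducibility of the pieces and explicit brackets with $x_1^k$-coefficient elements of $\g_0$), but the two verifications above are exactly the proof, and they are absent.
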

\begin{proof}
Since $\g$ is a simple $\Z$-graded Lie superalgebra of depth 1, the $\g_0$-module $\g_{-1}$ is irreducible, hence is generated by $v_{-1}$, and we may assume that $r\geq 0$. 
We have
\[(\g_r)_{\bar{0}} =\langle x_1^a\de_{2}(q)\partial_{1}-ax_1^{a-1}q\partial_{2};
x_1^bY
\,|\, a, b\in\Z_+, \deg(q)=r+1, Y\in (S_4)_r\rangle,\]

\medskip

\noindent
where $(S_4)_r$ denotes the subspace of zero-divergence vector fields in the variables $x_2, \dots, x_5$, of principal degree $r$ (i.e., $\deg x_i=-\deg
\partial_{i}=1$).
The  vector fields of the form $x_1^bY$, such that $Y\in (S_4)_r$, span the   
$(\g_0)_{\bar{0}}$-submodule $(S_4)_r\otimes \C[[x_1]]$ of $(\g_r)_{\bar{0}}$.
 
We have:
\[[x_1^kx_3\partial_{4}, v_r]=-kx_1^{k-1}x_2^rx_3\partial_{4},\]
hence $(S_4)_r\otimes \C[[x_1]]$ is contained in the $\g_{\bar{0}}$-submodule generated by $v_r$, since $(S_4)_r$ is an irreducible $\mathfrak{sl}_4$-module.

The quotient of $(\g_r)_{\bar{0}}$ by the submodule $(S_4)_r\otimes \C[[x_1]]$  is isomorphic to
$S^r(4)\otimes W_1$: two vector fields $\sum_{i=1}^5 f_i\partial_i$ and $\sum_{i=1}^5 g_i\partial_i$ lie in the same class if and only if $f_1=g_1$.

We have
\[ [x_1^kx_3\partial_{2}, v_r]=rx_1^{k}x_2^{r-1}x_3\partial_{1}-
kx_1^{k-1}x_2^rx_3\partial_{2},\]
hence, by the irreducibility of $S^r(\C^4)$ as an $\mathfrak{sl}_4$-module, $(\g_r)_{\bar{0}}$ is
generated by $v_r$.

The odd part of $\g_r$ is given by:
$$(\g_r)_{\bar{1}}=
\langle x_1^ad\sigma-d(x_1^a)\wedge\sigma, x_1^bdx_1\wedge\tau\,|\,
 a,b\in\Z_+,
\sigma\in\Omega^1_{r+1}(4), \tau\in\Omega^1_{r+1}(4)_{cl}\rangle,$$
where $\Omega^1_r(4)$ and ${\Omega^1}_{r+1}(4)_{cl}$ denote the space of differential 1-forms of degree $r$ and closed differential 1-forms of degree $r+1$, respectively, in the four variables 
$x_2, \dots, x_5$ ($\deg x_i=1=\deg dx_i$).

The subspace  consisting of closed differential 2-forms $x_1^bdx_1\wedge \tau$, such that $\tau\in \Omega^1_{r+1}(4)_{cl}$, is  a
$(\g_0)_{\bar{0}}$-submodule of $(\g_r)_{\bar{1}}$. 
%that we will denote by $T$.
We have:
\[[x_1d_{23}+x_2d_{13}, v_r]=
-(r+1)x_2^rd_{23}.\]
By the irreducibility of $\Omega^2_r(4)_{cl}$ as an $\mathfrak{sl}_4$-module, the form
$x_2^rd_{23}$ generates the whole $\Omega^2_r(4)_{cl}$ as an $\mathfrak{sl}_4$-module.
Besides, for $x_1^aY\in \C[x_1]\otimes \mathfrak{sl}_4$ and $\omega\in\Omega^2_r(4)_{cl}$,
\[[x_1^aY,\omega]=d(x_1^ai_Y(\omega))=ax_1^{a-1}dx_1\wedge i_Y(\omega)+
x_1^ad(i_Y(\omega)),\]
hence we obtain all elements in $(\g_r)_{\bar{1}}$ since  $\Omega^1_{r+1}(4)$ is spanned by contractions of  closed 2-forms of degree $r$ along  vector fields in $\mathfrak{sl}_4$. Indeed, let $\sigma=gdx_j$ for some monomial $g$ of degree $r+1$ in the variables $x_2, \dots, x_5$ and some $j\in\{2,\dots, 5\}$. Let  $k\in\{2,\dots, 5\}$ be such that $x_k$ divides $g$ and let $h\neq k,j$. Take $f\in \C[x_2, \dots, x_5]$, a monomial of degree $r+1$, such that $x_k\de_{h}f=g$ and $\omega=d(fdx_j)=\sum_i\de_{x_i}fd_{ij}$. Then
\[i_{x_k\de_{h}}(\omega)=x_k\de_{h}fdx_j=gdx_j.\]

%Besides,
%\[[x_1^sdx_2\wedge dx_3+sx_1^{s-1}x_2dx_1\wedge dx_3, x_2^r\partial_1]=
%-s((s-1)x_1^{s-2}x_2^{r+1}dx_1\wedge dx_3+(r+1)x_1^{s-1}x_2^{r}dx_2\wedge dx_3).\]
%Therefore, by the previous observation, all forms $x_1^bx_2^{r+1}dx_1\wedge dx_3$ are generated by the vector $v=x_2^r\partial_1$, hence, by the irreducibility of
%$\Omega^1_{r+1}(4)_{cl}$ as an $\mathfrak{sl}_4$-module, the whole submodule $T$ is generated by $v$.

%The quotient of $(\g_r)_{\bar{1}}$ with respect to the submodule $T$  is isomorphic to
%$\Omega^2_r(4)_{cl}\otimes \C[x_1]$: two forms $x_1^ad\sigma-d(x_1)^a\wedge\sigma$ and $x_1^{a'}d{\sigma}'-d(x_1)^{a'}\wedge{\sigma}'$   lie in the same class if and only if $a=a'$ and $d\sigma=d{\sigma}'$.
\end{proof}

Each of the subspaces $\g_j$ carries a $\Z$-grading by finite-dimensional subspaces induced by the principal grading of $E(5,10)$, i.e., the grading of type $(2,2,2,2,2)$. Recall that in this grading $\deg d=-5/2$, therefore we have the induced $\Z$-grading
\[\g_r=\prod_{k\geq 2r-2}\g_{r,k}.\]

Each $E(1,6)$-module $\g_r$ is a linearly compact space, hence the dual modules are discrete spaces, and we have
\[\g_r^*=\bigoplus_{k\geq 2r-2}\g_{r,k}^*,\]
where all $\g_{r,k}$ and $\g_{r,k}^*$ are $\g_0$-modules. In fact, all $E(1,6)$-modules $\g_r^*$ are objects in the category $\mathcal{P}(L,L_{\geq 0})$
of continuous $\Z$-graded modules with discrete topology over a $\Z$-graded linearly compact Lie superalgebra $L$, finitely generated as $\mathcal{U}(L_{<0})$-modules (\cite{CCK0}).

\medskip

We denote by $M(\lambda)$ the $E(1,6)$-Verma module induced by an irreducible finite-dimensio\-nal $L_0$-module of highest weight $\lambda=(a,b,c;d)$ where
$a, b, c\in\Z_+$ and $d\in\C$ denote the eigenvalues of the elements $x_2\de_{2}-
x_3\de_{3}$, $x_3\de_{3}-
x_4\de_{4}$, $x_4\de_{4}-
x_5\de_{5}$ and $z$, respectively, extended by zero to $L_{>0}$.
Besides, we denote by $I(\lambda)$ the irreducible quotient of $M(\lambda)$ by its unique maximal submodule.

\begin{theorem}\label{3.5} The $E(1,6)$-modules $\g_j^*$ are irreducible  for every $j\geq -1$, and
$\g_{-1}^*\cong I(1,0,0; -\frac{1}{2})$, 
%$\g_{0}^*\cong I(0,0,0; {2})$,
$\g_{r}^*\cong I(0,0,r; \frac{r}{2}+2)$, for $r\geq 0$.
\end{theorem}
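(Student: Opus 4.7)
I plan to exhibit, for each $r\geq -1$, an explicit highest weight vector $\phi_r\in\g_r^*$ of the weight stated in the theorem, show that every nonzero $L$-submodule of $\g_r^*$ contains $\phi_r$, and then prove that $\phi_r$ generates $\g_r^*$ as an $L$-module. Together these yield irreducibility and $\g_r^*\cong I(\lambda)$. For the highest weight vector: the bottom piece $\g_{r,2r-2}$ for the $(2,2,2,2,2)$-grading of $\g_r$ is purely even, and from the description of $(\g_r)_{\bar 0}$ one reads off $\g_{r,2r-2}=\C[x_2,\dots,x_5]_r\cdot\partial_1\cong S^r(\C^4)$ for $r\geq 0$ and $\g_{-1,-2}=\langle\partial_2,\partial_3,\partial_4,\partial_5\rangle\cong V(\lambda_3)$ for $r=-1$, both irreducible as $L_0$-modules. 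I take $\phi_r\in\g_{r,2r-2}^*$ to be dual to the $\mathfrak{sl}_4$-lowest weight vector: $x_5^r\partial_1$ for $r\geq 0$ and $\partial_2$ for $r=-1$. Since $L_{>0}$ acts on $\g_r^*$ by strictly decreasing the degree label $k$ and $\phi_r$ lives in the bottom, $L_{>0}\cdot\phi_r=0$; a direct computation of the contragredient action of $\mathfrak{sl}_4$ and of $z=2x_1\partial_1-\frac{1}{2}\sum_{i=2}^5 x_i\partial_i$ then yields the stated weights $(0,0,r;\frac{r}{2}+2)$ and $(1,0,0;-\frac{1}{2})$.

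\textbf{Cogeneration.} Suppose $N\subseteq\g_r^*$ is a nonzero graded $L$-submodule with $N\cap\g_{r,2r-2}^*=0$, so that every $f\in N$ annihilates $\g_{r,2r-2}$. For any $Y\in U(L)$ and $v\in\g_{r,2r-2}$, the identity $f(Y\cdot v)=\pm(Y^*\cdot f)(v)$ (with $Y^*$ the antipode of $Y$), combined with $Y^*\cdot f\in N$, forces $f(Y\cdot v)=0$. Taking $v=v_r$ and using $U(L)\cdot v_r=\g_r$ from Proposition \ref{generated} then gives $f=0$, hence $N=0$, a contradiction. Thus $N\cap\g_{r,2r-2}^*$ is a nonzero $L_0$-submodule of the irreducible $\g_{r,2r-2}^*$ and therefore equals all of it, containing $\phi_r$.

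\textbf{Generation and irreducibility.} It remains to show $U(L_{<0})\cdot\g_{r,2r-2}^*=\g_r^*$; combined with cogeneration, any nonzero submodule then contains $\phi_r$ and hence equals $U(L)\cdot\phi_r=\g_r^*$, giving irreducibility. This generation claim is the main obstacle: it does not follow formally from Proposition \ref{generated}, because $\dim L_m\neq\dim L_{-m}$ in general, so the surjective bracket map $U(L_{>0})_m\otimes\g_{r,2r-2}\to\g_{r,2r-2+m}$ is not a transpose of the required action $U(L_{<0})_{-m}\otimes\g_{r,2r-2}^*\to\g_{r,2r-2+m}^*$. I would verify it by induction on $m$, using the explicit generators of $\g_{r,k}$ produced in the proof of Proposition \ref{generated}---built from $v_r$ by specific brackets with elements of $L_{>0}$---to exhibit, for each functional in $\g_{r,2r-2+m}^*$, a preimage inside $U(L_{<0})_{-m}\cdot\g_{r,2r-2}^*$. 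For $r=0$ and $r=-1$ irreducibility is immediate and bypasses this step: $\g_0\cong E(1,6)$ as an $L$-module via the adjoint, so $\g_0^*$ is the coadjoint of a simple Lie superalgebra; and $\g_{-1}$ is already $L_0$-irreducible, as noted in the proof of Proposition \ref{generated}.
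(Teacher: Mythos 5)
Your first two steps are sound and essentially parallel to what the paper does: the bottom graded piece $\g_{r,2r-2}\cong S^r(\C^4)\cdot\partial_1$ (resp.\ $\langle \partial_2,\dots,\partial_5\rangle$ for $r=-1$) is correctly identified, the dual functional $\phi_r$ is a genuine highest weight vector of the stated weight, and your cogeneration argument is exactly the right dual use of Proposition \ref{generated} (modulo two small points you should make explicit: submodules of $\g_r^*$ are automatically graded because the $k$-grading is, up to a shift, the eigenspace decomposition of $z\in L_0$, and the generation in Proposition \ref{generated} is topological, so you need continuity of the functionals to conclude $f=0$). The problem is the third step. For $r\geq 1$ you reduce irreducibility to the claim $U(L_{<0})\cdot\g_{r,2r-2}^*=\g_r^*$, acknowledge that it does not follow from Proposition \ref{generated}, and then only announce that you "would verify it by induction on $m$" by exhibiting preimages of functionals. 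No argument is actually given, and this claim is not a routine verification: unwinding the duality, it is equivalent to showing that for every $k>2r-2$ no nonzero vector of $\g_{r,k}$ is annihilated by the whole negative part $L_{<0}=\langle\partial_1, d_{ij}\rangle$, i.e.\ an absence-of-primitive-vectors statement in \emph{every} positive degree. That statement is precisely the hard content of Theorem \ref{3.5} for $r\geq1$, so as it stands your proposal has a genuine gap at its decisive point (your separate arguments for $r=0,-1$ are fine).

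The paper closes exactly this gap by a different mechanism: from the generation of $\g_r$ by $v_r$ it realizes $\g_r^*$ as a quotient of the Verma module $M(0,0,r;\frac r2+2)$, and then invokes the classification of singular vectors of Verma modules over $E(1,6)\cong CK_6$ (\cite[Theorem 4.1]{BKL1}, \cite{B}): the unique singular vector of positive degree has degree $1$, so the only possible obstruction to irreducibility sits in $(\g_{r,2r-1})^*$, and it is killed by the short explicit computation showing that no nonzero $\sum P_{ij}d_{ij}\in\g_{r,2r-1}$ is annihilated by the elements $d_{kt}\in L_{-1}$. In other words, the external input pins the problem down to a single degree, where a one-line check suffices; your route avoids that input but then must establish the analogous vanishing in all degrees $k>2r-2$ (or produce the explicit $U(L_{<0})$-expressions you allude to), which you have not done. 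Either carry out that full computation, or follow the paper and quote the singular vector classification; without one of these, the proof of irreducibility for $r\geq1$ is incomplete.
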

\begin{proof}
The vector $v_{-1}=\partial_{5}$ in $\g_{-1}$ is a  highest weight vector of weight $(0,0,1; \frac{1}{2})$, which is annihilated by the negative part of $E(1,6)$. Therefore $\g_{-1}^*$, being irreducible, is isomorphic to the unique irreducible quotient $I(1,0,0; -\frac{1}{2})$  of the Verma module 
$M(1,0,0; -\frac{1}{2})$.

Similarly, the vector $v_0=\partial_{1}$ in $\g_{0}$ is a  highest weight vector of weight $(0,0,0; -2)$, which is annihilated by the negative part of the simple linearly compact Lie superalgebra $E(1,6)$. Therefore $\g_{0}^*$ is isomorphic to the unique irreducible quotient $I(0,0,0; 2)$  of the Verma module 
$M(0,0,0; 2)$. Hence we may assume $r\geq 1$.

The vector $v_r=x_2^r\partial_{1}$ in $\g_{r}$ is a  highest weight vector of weight $(r,0,0; -\frac{r}{2}-2)$, which is annihilated by the negative part of $E(1,6)$ and generates $\g_r$ as a $\g_0$-module by Proposition \ref{generated}. It follows that
$\g_r^*$ is generated by a highest weight vector $w$ of weight $(0,0,r; \frac{r}{2}+2)$
hence we can construct a morphism of $\g_0$-modules:
\[\psi: M(0,0,r; \frac{r}{2}+2) \rightarrow \g_r^*\]
sending the generating vector of $M(0,0,r; \frac{r}{2}+2)$ to $w$.
It follows that $\g_r^*$ is isomorphic to a quotient of the Verma module 
$M(0,0,r; \frac{r}{2}+2)$. Notice that $\psi$ is a graded morphism sending
$M(0,0,r; \frac{r}{2}+2)_k$ to $(\g_{r, 2r-2+k})^*$.

Now let $W$ be a proper submodule of $\g_r^*$. Then $\psi^{-1}(W)$ is a proper submodule of $M(0,0,r; \frac{r}{2}+2)$, hence it contains the unique singular vector $v$ of $M(0,0,r; \frac{r}{2}+2)$ of positive degree (see \cite[Theorem 4.1]{BKL1} and \cite[Section 3]{B}). This vector has degree 1, hence $\psi(v)$ is
a singular vector of degree 1 in $(\g_{r,2r-1})^*$. But this is impossible since $\g_{r,2r-1}$ does not contain non-zero vectors annihilated by the negative part of $E(1,6)$. Indeed,
\[\g_{r,2r-1}=\{\omega\in\langle pd_{ij}\rangle \,|\, \deg(p)=r, d\omega =0\},\]
hence if $\psi(v)=\sum_{2\leq i<j\leq 5}P_{ij}d_{ij}$ then 
$d_{kt}(\psi(v))=\sum_{2\leq i<j\leq 5}\epsilon_{[ktij]}P_{ij}\partial_{[ktij]}$, hence $d_{kt}(\psi(v))=0$ for every $k,t$ if and only if $P_{ij}=0$ for every $i,j$.

It follows that $\g_r^*$ is irreducible and hence isomorphic to 
$I(0,0,r; \frac{r}{2}+2)$.

% quotient
%\[I(0,0,r;\frac{r}{2}+2)=M(0,0,r; \frac{r}{2}+2)/M(1,0,r-1; \frac{r}{2}+1).\]
\end{proof}

\section{Embedding the Lie superalgebra $E(1,6)$ into $E(4,4)$}
Consider the linearly compact Lie superalgebra $K(1,6)=\C[[t]]\otimes \bigwedge(\xi_1,\xi_2,\xi_3,\eta_1,\eta_2,\eta_3)$ with bracket given by  
\[
[f,g]=(2-E)f \frac{\de g}{\de t}- \frac{\de f}{\de t}(2-E)g+(-1)^{p(f)}\sum_{i=1}^3 \Big(\frac{\de f}{\de \xi_i}\frac {\de g}{\de \eta_ {i}}+\frac{\de f}{\de  \eta_{i}}\frac{\de g}{\de \xi_ i}\Big),
\]
where $E=\sum_{i=1}^3 (\xi_i \frac{\de}{\de \xi_i}+\eta_{i} \frac{\de}{\de  \eta_{i}})$.
We shall need also a slightly different description of $K(1,6)$. Namely, letting for all $j=1,2,3$,
\[
\rho_j=\frac{1}{\sqrt 2} (\xi_j+\eta_j),\,\,\rho_{j+3}=\frac{1}{\sqrt {-2}} (\xi_j-\eta_j),
\]
i.e.,
\[
\xi_j=\frac{1}{\sqrt 2} (\rho_j+\sqrt{-1}\rho_{j+3}),\,\,\eta_j=\frac{1}{\sqrt 2} (\rho_j-\sqrt{-1}\rho_{j+3}),
\]
we have $K(1,6)=\C[[t]]\otimes \bigwedge(\rho_1,\ldots,\rho_6)$ with bracket given by  
\[
[f,g]=(2-E)f \frac{\de g}{\de t}- \frac{\de f}{\de t}(2-E)g+(-1)^{p(f)}\sum_{i=1}^6 \frac{\de f}{\de \rho_i}\frac {\de g}{\de \rho_ {i}},
\]
and one can check that we also have $E=\sum_{i=1}^6 \rho_i \frac{\de}{\de \rho_i}$.
In our arguments we make use of both descriptions of $K(1,6)$ since the bracket involves simpler computations if one uses the $\rho_i$'s and, on the other hand, we have a simpler decomposition of $K(1,6)$ as a graded Lie superalgebra if one uses the $\xi_i$'s and the $\eta_i$'s.
\begin{remark}
	We have $\rho_j\rho_{j+3}=\sqrt{-1}\xi_j\eta_j$ for all $j=1,2,3$.
\end{remark}
If $I=(i_1,\ldots,i_k)$ is a sequence with distinct entries in $\{1,2,3,4,5,6\}$ we let $\rho_I=\rho_{i_1}\cdots \rho_{i_k}$ and $\rho_I^*$ be the unique monomial such that $\rho_I \rho_I^*=\rho_1\rho_2\cdots \rho_6$. The $*$-operator is extended to $\bigwedge(\rho_1,\ldots,\rho_6)$ by linearity. %We extend the $*$-operator to $K(1,6)$ as the unique linear operator such that $(t^n\rho_I)^*=t^n \rho_I^*$.

We recall that a $\mathbb Z$-grading of $K(1,6)$ can be defined by assigning an integer degree to the variables $t,\xi_i,\eta_i$ ($i=1,2,3$), such that $\deg t=\deg \xi_i+\deg \eta_i$ for all $i=1,2,3$. We shall call $(\deg t |\deg \xi_1,\deg \xi_2, \deg \xi_3, \deg \eta_1,\deg \eta_2, \deg \eta_3)$ the type of the grading. Then the degree of a monomial is equal to the sum of degrees of factors minus 1.
We consider on $K(1,6)$ the grading of type $(1|0,1,1,1,0,0)$. Although this is isomorphic to the more standard grading $(1|1,1,1,0,0,0)$ it will induce a different grading on the subalgebra $E(1,6)$ (see \cite[Remark 6.1]{CaK}) that we are going to discuss. It is sometimes convenient to rename $\eta_1=\xi_4$ and $\xi_1=\eta_4$ so that 

\[
K(1,6)=\C[[t]]\otimes \bigwedge(\xi_2,\xi_3,\xi_4,\eta_2,\eta_3,\eta_4)
\]
and all the $\xi_i's$ (resp.\ the $\eta_i$'s) have the same degree (1 and 0 respectively). In particular, if $2\leq i_1<\cdots<i_k\leq 4$ and $2\leq j_1<\cdots<j_h\leq 4$ we have $\deg (t^n \xi_{i_1}\cdots \xi_{i_k}\eta_{j_1}\cdots \eta_{j_h})=n+k-1$. We observe that elements $\rho_j$ are not homogeneous with respect to this grading. 

For $i\in \{1, 2, 3, 4\}$ it is also convenient to set $\xi_{\bar i}=\eta_i$
(the bar here is just a symbol).

If $I=(i_1,\ldots,i_k)$ is a sequence with distinct entries in either $\{1,2,3,\bar 1,\bar 2,\bar3\}$ or in $\{2,3,4,\bar 2, \bar 3, \bar 4\}$ we let $|I|=k$, $\xi_I=\xi_{i_1}\cdots \xi_{i_k}$ and $\bar I=(\bar{i_1},\ldots,\bar{i_k})$ (where $\bar{\bar i}=i$). If $f\in K(1,6)$ is a non-zero monomial $f=at^n \xi_I$ we also let $|f|=|I|$. 
 Moreover, we let $\xi_I^\#$ be the unique monomial such that $\xi_{\bar I} \xi_I^\#=\xi_2 \xi_3 \xi_4 \eta_{2} \eta_{3} \eta_{4}=-\xi_1 \xi_2 \xi_3 \eta_{1} \eta_{2} \eta_{3}$. For example, if $I=(2,\bar 4)$ then $\bar I=(\bar 2,4)$ and $\xi_I^\#=-\xi_2 \xi_3 \eta_{3} \eta_{4}$. The $\#$-operator is also extended to $\bigwedge(\xi_1,\xi_2,\xi_3,\eta_1,\eta_2,\eta_3)$ by linearity.

\begin{lemma}\label{diesis} For all $X\in \bigwedge(\xi_1,\xi_2,\xi_3,\eta_1,\eta_2,\eta_3)=\bigwedge(\rho_1,\rho_2,\rho_3,\rho_4,\rho_5,\rho_6)$
we have \[X^*=\sqrt{-1} X^{\#}.\]
\end{lemma}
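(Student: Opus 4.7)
The plan is to reformulate both $*$ and $\#$ via symmetric bilinear forms on $\bigwedge V$, where $V:=\langle \rho_1,\dots,\rho_6\rangle=\langle \xi_1,\xi_2,\xi_3,\eta_1,\eta_2,\eta_3\rangle$, and to reduce the identity to a routine check on $V$ itself.

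First, I would establish the top-form identity $\omega_\rho=\sqrt{-1}\,\omega_\xi$, where $\omega_\rho:=\rho_1\rho_2\cdots\rho_6$ and $\omega_\xi:=\xi_2\xi_3\xi_4\eta_2\eta_3\eta_4=-\xi_1\xi_2\xi_3\eta_1\eta_2\eta_3$. This follows from the preceding Remark by pairing factors as $\omega_\rho=-(\rho_1\rho_4)(\rho_2\rho_5)(\rho_3\rho_6)$ (the sign coming from sorting the permutation $(1,4,2,5,3,6)$) and substituting $\rho_j\rho_{j+3}=\sqrt{-1}\,\xi_j\eta_j$ three times.

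Next, introduce the non-degenerate symmetric bilinear forms $(\cdot,\cdot)_\rho$ and $(\cdot,\cdot)_\xi$ on $\bigwedge V$ making the monomial bases $\{\rho_I\}$ and $\{\xi_I\}$ respectively orthonormal, and let $\overline{\,\cdot\,}$ denote the algebra involution of $\bigwedge V$ determined by $\xi_j\leftrightarrow\eta_j$ on $V$. A short calculation from the defining relations $\rho_I\rho_I^*=\omega_\rho$ and $\xi_{\bar I}\xi_I^\#=\omega_\xi$ yields
\[X\wedge Y^*=(X,Y)_\rho\,\omega_\rho\qquad\text{and}\qquad X\wedge Y^\#=(X,\bar Y)_\xi\,\omega_\xi\]
for all $X,Y$ of complementary degree. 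Combined with $\omega_\rho=\sqrt{-1}\omega_\xi$ and the non-degeneracy of the wedge pairing, the lemma $X^*=\sqrt{-1}\,X^\#$ reduces to the bilinear identity $(X,Y)_\rho=(X,\bar Y)_\xi$ for all $X,Y$ of equal degree.

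Finally, both forms on $\bigwedge V$ are the determinantal extensions of their restrictions to $V$, and $\overline{\,\cdot\,}$ is multiplicative; so by linearity it suffices to verify the identity on $V=\bigwedge^1 V$. Using $\rho_j=\tfrac{1}{\sqrt 2}(\xi_j+\eta_j)$ and $\rho_{j+3}=-\tfrac{\sqrt{-1}}{\sqrt 2}(\xi_j-\eta_j)$, one computes that the Gram matrix of $(\cdot,\cdot)_\rho$ in the basis $\{\xi_1,\xi_2,\xi_3,\eta_1,\eta_2,\eta_3\}$ is the block $\bigl(\begin{smallmatrix}0 & I_3\\ I_3 & 0\end{smallmatrix}\bigr)$, which coincides with the matrix of $(v,w)\mapsto(v,\bar w)_\xi$ since $(\cdot,\cdot)_\xi$ is the identity on $V$ and bar acts by this same block. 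The main step that requires care is this degree-one Gram-matrix comparison, but it decouples into three independent $2\times 2$ blocks indexed by $j\in\{1,2,3\}$, so in the end it is essentially a one-variable verification.
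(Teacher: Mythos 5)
Your proof is correct, but it takes a genuinely different route from the paper. The paper argues directly on monomials: it writes $\rho_I$ (up to sign) as a product governed by a partition $I_1,I_2,I_3,I_4$ of $\{1,2,3\}$, computes $\rho_I^*$ and $\rho_I^\#$ separately, and matches them by an explicit comparison of the two signs $\epsilon$ and $\epsilon'$ arising from sorting the corresponding volume forms. You instead characterize both operators through the wedge pairing, via $X\wedge Y^*=(X,Y)_\rho\,\omega_\rho$ and $X\wedge Y^\#=(X,\bar Y)_\xi\,\omega_\xi$, so that by non-degeneracy of $\bigwedge^k\otimes\bigwedge^{6-k}\to\bigwedge^6$ the lemma reduces to the single top-form identity $\omega_\rho=\sqrt{-1}\,\omega_\xi$ (which indeed follows from the Remark $\rho_j\rho_{j+3}=\sqrt{-1}\,\xi_j\eta_j$ with the correct sign for the shuffle $(1,4,2,5,3,6)$) together with the equality of bilinear forms $(\cdot,\cdot)_\rho=(\cdot,\overline{\,\cdot\,})_\xi$; since both forms are determinantal extensions and the bar involution is multiplicative, this collapses to the degree-one Gram matrix computation, where both forms are given by the block matrix with $I_3$ off the diagonal. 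I checked the pairing identities (including the sign convention $\xi_{\bar I}\xi_I^\#=\omega_\xi$, which is exactly what produces the bar twist in your second identity), the top-form sign, and the degree-one Gram matrices, and all are right; the edge cases of degree $0$ and $6$ are also consistent. What your approach buys is the elimination of all case-by-case sign bookkeeping, at the cost of introducing the bilinear-form formalism; the paper's computation is more pedestrian but entirely self-contained. One cosmetic point: when you say the identities hold for "$X,Y$ of complementary degree" you mean $\deg X=\deg Y$ (so that $X$ and $Y^*$, $Y^\#$ are complementary); stating it that way would avoid ambiguity.
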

\begin{proof} It is enough to show the result for $X=\rho_I$ for some sequence $I$ with distinct entries in $\{1,2,3,4,5,6\}$. We observe that there esists a partition $I_1,I_2,I_3,I_4$ of $\{1,2,3\}$ 
	 such that (up to a sign)
	\[
	\rho_I=\prod_{j\in I_1}\rho_j \rho_{j+3} \prod_{j\in I_2} \rho_j \prod_{j\in I_3}\rho_{j+3}.
	\]
	Then 
	\[
	\rho_I^*= \epsilon \prod_{j\in I_2} \rho_{j+3} \prod_{j\in I_3}\rho_{j}\prod_{j\in I_4}\rho_j \rho_{j+3},
	\]
	where $\epsilon=\pm 1$ is such that 
	\[
	\epsilon \prod_{j\in I_1}\rho_j \rho_{j+3} \prod_{j\in I_2} \rho_j \prod_{j\in I_3}\rho_{j+3}\prod_{j\in I_2} \rho_{j+3} \prod_{j\in I_3}\rho_{j}\prod_{j\in I_4}\rho_j \rho_{j+3}=\rho_1 \rho_2 \rho_3 \rho_4 \rho_5 \rho_6.
	\]
	
	Next we observe that if $\xi_J \eta_j\neq 0$, $\xi_J \xi_j\neq 0$ and $(\xi_J\eta_j)^\#=\xi_K \eta_j$ for some sequences $J,K$ in $\{1,2,3,\bar 1, \bar 2, \bar 3\}$ and some index $j\in \{1,2,3\}$, then $(\xi_J\xi_j)^\#=-\xi_K \xi_j$.
	Moreover we observe that
	\[
	\Big(\prod_{j\in I_1} \eta_j \xi_j \prod_{j\in I_2} \eta_j \prod_{j\in I_3} \xi_j \Big)^\#=\epsilon' \prod_{j\in I_2}{\eta_j} \prod_{j\in I_3}\xi_j \prod_{j\in I_4} \xi_j \eta_j,
	\]
	where the sign $\epsilon=\pm1$ is such that 
	\[
	\epsilon'\prod_{j\in I_1} \xi_j \eta_j \prod_{j\in I_2} \xi_j \prod_{j\in I_3} \eta_j\prod_{j\in I_2}{\eta_j} \prod_{j\in I_3}\xi_j \prod_{j\in I_4} \xi_j \eta_j=\epsilon' \xi_2\xi_3\xi_4\eta_2\eta_3\eta_4=-\epsilon' \xi_1\xi_2\xi_3\eta_1\eta_2\eta_3.
	\]
	In particular we can observe that $\epsilon'=-\epsilon$. From these observations we can deduce that
	\begin{align*}\rho_I^\#&=\Big(\prod_{j\in I_1}\sqrt{-1}\xi_j \eta_j \prod_{j\in I_2} \frac{1}{\sqrt 2}(\xi_j+\eta_{j})\prod_{j\in I_3} \frac{1}{\sqrt {-2} }(\xi_j-\eta_j)\Big)^\#\\
		&=-\epsilon_I (-\sqrt{-1})^{|I_1|}\prod_{j\in I_2}\frac{1}{\sqrt 2} (\eta_j-\xi_j) \prod_{j\in I_3} \frac{1}{\sqrt {-2}}(\xi_j+\eta_j) \prod_{j\in I_4} \xi_j \eta_j\\
		&=-\epsilon_I (-\sqrt{-1})^{|I_1|+|I_2|+|I_3|+|I_4|} \prod _{j\in I_2} \rho_{j+3} \prod_{j\in I_3} \rho_j \prod_{j\in I_4} \rho_j \rho_{j+3}\\
		&= - \sqrt{-1} \rho_I^*,
	\end{align*}
	since $|I_1|+|I_2|+|I_3|+|I_4|=3$.
	
	%The rest can be deleted.
	%
%Since $(\rho_I^*)^*=(-1)^{|I|}\rho_I$ and similarly for $\#$ it is enough to verify the statement for elements $\xi_I$ such that $|I|\leq 3$. We perform explicitly two cases.
%\begin{itemize}
%	\item $X=1$. We have \begin{align*}1^*&=\rho_{1}\rho_2\rho_3\rho_4\rho_5\rho_6=-\rho_{1}\rho_4\rho_2\rho_5\rho_3\rho_6\\
		%&=-\sqrt{-1}^3 \xi_1 \eta_1 \xi_2 \eta_2 \xi_3 \eta_3\\
%		&= \sqrt{-1}\xi_2 \xi_3 \xi_4 \eta_{2} \eta_{3} \eta_{4}\\
%		&= \sqrt{-1} 1^{\#}.
%		\end{align*}
%	\item $X=\xi_j$, with $j=1,2,3$. Let $\{1,2,3\}=\{j,k,l\}$. We have
%	\begin{align*}\xi_j^*&= \frac{1}{\sqrt 2}(\rho_j^*+\sqrt{-1}\rho_{j+3}^*)\\
%		&= \frac{1}{\sqrt 2}(-\rho_{j+3}\rho_k\rho_{k+3}\rho_l \rho_{l+3}+\sqrt{-1} \rho_{j}\rho_k\rho_{k+3}\rho_l \rho_{l+3})\\
%		&=\sqrt{-1}\frac{\rho_j+\sqrt{-1}\rho_{j+3}}{\sqrt 2}i\xi_k\eta_k i\xi_l\eta_l\\
%		&=-\sqrt{-1} \xi_j \xi_k\eta_k\xi_l\eta_l\\
%		&=\sqrt{-1}\xi_j^{\#}.
%	\end{align*}
%We leave the other cases to the reader.
%\end{itemize}
\end{proof}
We let $A:K(1,6)\rightarrow K(1,6)$ be the linear operator defined by
\[
A(t^n \xi_I)=(-1)^{|I|(|I|+1)/2}\de _t^{3-|I|}t^n \xi_I^\#,
\]
where $\de_t^{-1}=\int_0^t$ (homogeneous integration with respect to $t$), $I$ is any sequence with distinct coefficients in $\{2,3,4,\bar 2,\bar 3,\bar 4\}$ and $n\geq 0$. 

By Lemma \ref{diesis}, if $I$ is a sequence with coefficients in $\{1,2,3,4,5,6\}$ we also have 
\[A(t^n \rho_I)=-\sqrt{-1}(-1)^{|I|(|I|+1)/2}\de_t^{3-|I|}t^n \rho_I^*.\]
Moreover, since $(\rho_I^*)^*=(-1)^{|I|}\rho_I$ it follows immediately that  $A(t^n \rho_I)\neq 0$ then $A(A(t^n \rho_I))=t^n \rho_I$.
Finally, for all $f \in K(1,6)$ we let 
\[\iota (f)=f+A(f).\]

For all $k\geq 0$ we let $L_k=\langle t^n \rho_I:\, n+|I|=k\rangle$. We clearly have $K(1,6)=\prod_{k\geq 0} L_k$ as vector spaces and we observe that $A$ is injective on $L_k$ for $k\geq 3$ and is identically zero on $L_0,L_1,L_2$.

Cheng and Kac \cite[Remark 5.3.2]{CK} observed (without a proof) that elements of the form $\iota(f)$ span a Lie subalgebra of $K(1,6)$ isomorphic to $E(1,6)$. This fact is a consequence of the following results that will also be useful in the sequel.

\begin{definition} Let $I,J$ be sequences in $\{1,2,3,4,5,6\}$ and $n,m\geq 0$. Let $f=t^n\rho_I$ and $g=t^m \rho_J$ be monomials in $K(1,6)$.  We say that $(f,g)$ is an exceptional pair if the following conditions hold:
	\begin{itemize}
\item $n=m=0$;
\item $|I|+|J|\geq 4$;
\item $|I|\leq 1$ or $|J|\leq 1$;
\item $\rho_I \rho_J\neq 0\in \bigwedge(\rho_1,\ldots,\rho_6)$.
\end{itemize}
\end{definition}
\begin{lemma}\label{propertiesofA} Let $f=t^n\rho_I$ and $g=t^m \rho_J$ be monomials in $K(1,6)$. If $(f,g)$ is not an exceptional pair then we have
	\[
	A\Bigl([f,g]+[A(f),A(g)]\Bigr)=[A(f),g]+[f,A(g)].
	\]
If $(f,g)$ is an exceptional pair then $A\big([A(f),g]+[f,A(g)] \big)=[f,g]+[A(f),A(g)]=0$.
\end{lemma}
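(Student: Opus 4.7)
The plan is a direct verification via the decomposition of the bracket into a bosonic part $B(f,g)=(2-E)f\,\partial_t g-\partial_t f\,(2-E)g$ and a fermionic part $F(f,g)=(-1)^{p(f)}\sum_i \partial_{\rho_i}f\cdot \partial_{\rho_i}g$. On monomials $f=t^n\rho_I$, $g=t^m\rho_J$, $B(f,g)$ is a scalar multiple of $t^{n+m-1}\rho_I\rho_J$ while $F(f,g)$ is a scalar multiple of $t^{n+m}\sum_i(\partial_{\rho_i}\rho_I)(\partial_{\rho_i}\rho_J)$. The operator $A$ factors as a constant times $\partial_t^{3-|I|}\circ *$, and the key compatibility identities for $*$ (for disjoint $I,J$: $i_{\rho_J}(\rho_I^*)=\pm(\rho_I\rho_J)^*$, and $\partial_{\rho_i}(\rho_J^*)=\pm\rho_{J\cup\{i\}}^*$ for $i\notin J$) follow immediately from $\rho_K\rho_K^*=\rho_1\cdots\rho_6$.

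For the non-exceptional case, I would expand both sides of $A([f,g]+[A(f),A(g)])=[A(f),g]+[f,A(g)]$ into bosonic/fermionic contributions. Each term $A(B(\cdot,\cdot))$ or $A(F(\cdot,\cdot))$ on the left can be rewritten, via the $*$-contraction rules and the Leibniz rule for $\partial_t$, as a $B$- or $F$-term on the right with one argument replaced by its $A$-image. Matching coefficients and signs yields the identity; the non-exceptional hypothesis enters precisely as the guarantee that the $\partial_t^{3-|K|}$ factor in each $A$-application acts non-degenerately on the $t$-powers involved, so no information is lost under differentiation or integration.

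For the exceptional case, assume without loss of generality $|I|\leq 1$. Since $n=0$ and $3-|I|\geq 2$, the operator $\partial_t^{3-|I|}$ annihilates the $t^0$-factor, so $A(f)=0$, which kills $[A(f),g]$ and $[A(f),A(g)]$. The vanishings $\partial_t f=\partial_t g=0$ give $B(f,g)=0$; and $\rho_I\rho_J\neq 0$ forces $I\cap J=\emptyset$, so every summand of $F(f,g)$ contains a $\partial_{\rho_i}$ applied to a monomial not involving $\rho_i$, giving $F(f,g)=0$. Thus $[f,g]+[A(f),A(g)]=0$. For the remaining equality, $B(f,A(g))$ is a scalar multiple of $\rho_I\rho_J^*$, which vanishes because $I\subset J^c$ forces every $\rho_i$ with $i\in I$ to appear already in $\rho_J^*$; hence $[f,A(g)]=F(f,A(g))$, whose total $L$-degree is easily checked to be at most $2$ (from $|I|\leq 1$ and $|J^c|=6-|J|$), placing it in $L_0\cup L_1\cup L_2$ where $A$ vanishes identically.

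The main obstacle is the sign bookkeeping in the non-exceptional step: the prefactor $(-1)^{|I|(|I|+1)/2}$ in $A$, the Koszul signs in the star operator and interior products, the parity sign $(-1)^{p(f)}$ in $F$, and the integer coefficients $(2-|I|)$, $n$, $m$, and $(3-|K|)$ produced by the derivatives all interact, so the verification is best organized by splitting into subcases on the parities of $|I|$ and $|J|$ and on whether $\rho_I\rho_J$ is zero. Morally, the first identity says that $A$ behaves as a twisted automorphism of $K(1,6)$, and the exceptional pairs mark precisely the boundary where this twisting degenerates but is rescued by the separate vanishings of the exceptional case.
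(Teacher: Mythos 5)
Your plan for the non-exceptional case is where the genuine gap lies. The assertion that each term $A(B(\cdot,\cdot))$ or $A(F(\cdot,\cdot))$ on the left "can be rewritten as a $B$- or $F$-term on the right with one argument replaced by its $A$-image", so that the identity follows by "matching coefficients and signs", is false as a term-by-term statement, and this is exactly where all the work of the lemma sits. For example, for $I=J=\emptyset$ one left-hand term must be matched against a combination of four right-hand terms, and equality only holds after expanding $\partial_t^3$ and $\partial_t^4$ of products by the Leibniz rule; for $I=J$ with $|I|=2$ the left side is zero while the right side consists of two nonzero terms that cancel. So the identity emerges only from genuinely case-dependent computations, which the proposal does not carry out (the paper does them in about fifteen subcases, after normalizing to $|I|\le|J|\le3$ and, when $|J|=3$, $|I\cap J|\ge|I\cap J^c|$, which forces $[A(f),A(g)]=0$). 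Your sketch also does not say how to handle $|I|\ge4$ or $|J|\ge4$, where $A$ involves the integration $\partial_t^{-1}$ and $[A(f),A(g)]$ need not vanish; the paper avoids direct computation there by a bootstrap: it applies the already-proved identity to $(f,A(g))$ or $(A(f),g)$ and then applies $A$, using that $A^2=\mathrm{id}$ on $L_{\ge3}$ and $A\equiv0$ on $L_{\le2}$, with the low-degree leftovers being precisely the exceptional pairs. Relatedly, your explanation of where non-exceptionality enters ("$\partial_t^{3-|K|}$ acts non-degenerately, so no information is lost") is not the actual mechanism: the first identity genuinely fails for exceptional pairs (take $f=\rho_1$, $g=\rho_2\rho_3\rho_4$: the left side is $A(0)=0$, while $[f,A(g)]=\pm[\rho_1,\rho_1\rho_5\rho_6]\ne0$), so any uniform "matching" argument must break exactly there, and your proposal does not locate this breakdown. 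As written, the main identity is asserted, not proved.

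Your exceptional-case argument reaches the right conclusion but contains an error. The claim that $B(f,A(g))$ is a multiple of $\rho_I\rho_J^*$ and hence vanishes fails when $I=\emptyset$ and $|J|\ge4$: there $A(g)$ is a nonzero multiple of $t^{|J|-3}\rho_J^*$, so $B(f,A(g))=2\,\partial_tA(g)\neq0$, and $[f,A(g)]$ is not purely fermionic. The conclusion survives because this bosonic term has degree $(|J|-4)+(6-|J|)=2$, hence lies in $L_2$ where $A$ vanishes; the clean statement to prove is simply that for an exceptional pair $[A(f),g]+[f,A(g)]\in L_{\le2}$ (its fermionic part has degree $|I|+1\le2$ and its bosonic part, when nonzero, has degree $2$), which together with $A(f)=0$, $B(f,g)=0$ and $I\cap J=\emptyset$ gives both equalities. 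With that repair the exceptional case is fine; the non-exceptional case still needs either the explicit subcase verification or an argument replacing it.
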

\begin{proof}
		Let $f=t^n \rho_I$ and $g=t^m \rho_J$ and for notational convenience let $d=|I|$ and $e=|J|$. We first prove the statement if the following two conditions are satisfied: 
	\begin{enumerate}
		\item[a)]$d\leq e\leq 3$;
		\item[b)] if $e=3$ then $|I\cap J|\geq |I\cap J^c|$.
	\end{enumerate} 
	Note that conditions a) and b) above imply that $(f,g)$ is not an exceptional pair. 
	
	We also observe that conditions a) and b) imply that $[A(f),A(g)]=0$ since we necessarily have $|I^c\cap J^c|\geq 2$, where $I^c$ denotes the complement of $I$ in $\{1,2,3,4,5,6\}$.
	We will show that if a) and b) are satisfied then
	\begin{equation}\label{Afg}
		\sqrt{-1} A([f,g])=\sqrt{-1}[A(f),g]+\sqrt{-1}[f,A(g)]
	\end{equation}
	by a case by case analysis.
	We have
	\begin{align*}
		\sqrt{-1}A([f,g])=&(-1)^{\frac{(d+e)(d+e+1)}{2}}\big((2-d)\de_t^{3-d-e}(t^n\de_t t^m)-(2-e)\de_t^{3-d-e}(\de_t t^n t^m)\big)(\rho_I\rho_J)^*\\
		&+(-1)^{\frac{(d+e-2)(d+e-1)}{2}}(-1)^d \de_t^{5-d-e} t^{n+m} \sum_j(\de_{\rho_j}\rho_I \de_{\rho_j}\rho_J)^*,
	\end{align*}
	\begin{align*}
		\sqrt{-1}[A(f),g])=&(-1)^{\frac{d(d+1)}{2}}\big((d-4)\de_t^{3-d}t^n\de_t t^m-(2-e)\de_t^{4-d} t^n \,t^m\big)\rho_I^*\rho_J\\
		&+(-1)^{\frac{d(d+1)}{2}}(-1)^d \de_t^{3-d} t^n\,t^m \sum_j\de_{\rho_j}\rho_I^* \de_{\rho_j}\rho_J,
	\end{align*}
	and
	\begin{align*}
		\sqrt{-1}[f,A(g)])=&(-1)^{\frac{e(e+1)}{2}}\big((2-d)t^n\de_t^{4-e} t^m-(e-4)\de_t t^n \de_t^{3-e}t^m\big)\rho_I\rho_J^*\\
		&+(-1)^{\frac{e(e+1)}{2}}(-1)^d  t^n \de_t^{3-e}t^m \sum_j\de_{\rho_j}\rho_I \de_{\rho_j}\rho_J^*.
	\end{align*}
	
	\begin{itemize}
		\item $I=J=\emptyset$. Equation \eqref{Afg} becomes
		\[
		(2\de_t^3(t^n\de_tt^m)-2\de_t^3(\de_tt^nt^m))\,1^*=(-4\de_t^3 t^n \de_t t^m-2\de_t^4t^n t^m+2t^n\de_t^4 t^m+4\de_t t^n \de_t^3 t^m)\,1^*
		\]
		and \eqref{Afg} is satisfied recalling that $\de_t^k(fg)=\sum_{h=0}^k\binom{k}{h} \de_t^h f\de_t^{k-h}g$.
		\item $I=\emptyset$, $J=(k)$. Equation \eqref{Afg} becomes
		\[
		-(2\de_t^2(t^n \de_t t^m)-\de_t^2(\de_t t^n t^m))\rho_k^*=\de_t^3t^n t^m \de_{\rho_k} 1^*+( -2t^n \de_t^3 t^m -2\de_t t^n \de_t^2 t^m)\rho_k^*
		\]
		and it is satisfied with a similar argument observing that $\de_{\rho_k}1^*=\rho_k^*$.
		\item $I=(h)$ and $J=(k)$ with $h\neq k$. Equation \eqref{Afg} becomes
		\[
		-(\de_t(t^n \de_t t^m)-\de_t(\de_tt^n t^m))\rho_{(h,k)}^*=\de_t^2t^n t^m \de_{\rho_k}\rho_h^*+t^n\de_t^2 t^m \de_{\rho_h}\rho_k^*
		\]
		which is satisfied since $\de_{\rho_k}\rho_h^*=-\de_{\rho_h}\rho_k^*=\rho_{(h,k)}^*$.
		\item $I=J=(k)$. Equation \eqref{Afg} becomes
		\[
		-\de_t^3 t^{n+m}\, 1^*=-(-3\de_t^2 t^n \de_tt^m-\de_t^3 t^n t^m)(-1^*)-(t^n \de_t^3 t^m+3\de_tt^n \de_t^2 t^m)1^*.
		\]
		\item $I=(k)$ and $J=(k,h)$.  Equation \eqref{Afg} becomes
		\[
		\de_t^2 t^{n+m} \rho_h^*=\de_t^2 t^n t^m \de_{\rho_h}\rho_k^*(-\rho_k)-(t^n \de_t^2 t^m+2\de_t t^n \de_t t^m)\rho_k \rho_{(k,h)}^*.
		\]
		Observing that $(\de_{\rho_h} \rho_k^*) \rho_k=-\rho_h^*$ and $\rho_k\rho_{(k,h)}^*=-\rho_h^*$ Equation \eqref{Afg} follows.
		\item $I=(k,h)$, $J=(k,l)$, with $h\neq l$. Equation \eqref{Afg} becomes
		\[
		-\de_t t^{n+m} \rho_{(h,l)}^*=-\de_t t^n t^m \de_{\rho_l}\rho_{k,h}^*(-\rho_k)-t^n \de_t t^m (-\rho_k) \de_{\rho_h}\rho_{k,l}^*.
		\]
		We observe here that $\rho_k \de_{\rho_h} \rho_{(k,l)}^*=\de_{\rho_l}\rho_{(k,h)}^* \rho_k=-\rho_{(h,l)}^*$ to deduce that Equation \eqref{Afg} holds.
		\item $I=J$ with $d=2$. Equation \eqref{Afg} becomes
		\[
		0=-(-2\de_t t^n \de_t t^m) \rho_I^* \rho_I-( 2 \de_t t^n \de_t t^m) \rho_I \rho_I^*,
		\]
		which is satisfied since $\rho_I^* \rho_I= \rho_I \rho_I^*$.
		\item $I=J$ with $d=3$
		\[
		0=(-t^n \de_t t^m+\de_t t^n t^m) \rho_I^* \rho_I+(-t^n \de_t t^m + \de_t t^n  t^m) \rho_I \rho_I^*
		\]
		which is satisfied since $\rho_I^* \rho_I= -\rho_I \rho_I^*$.
		\item $I=(h,k)$, $J=(h,k,l)$. Equation \eqref{Afg} becomes
		\[
		0=-\de_t t^n t^m \de_{\rho_l}\rho_{(h,k)}^*\rho_{(h,k)}+\de_t t^n t^m \rho_{(h,k)}\rho_{(h,k,l)}^*
		\]
		which is satisfied since $\de_{\rho_l} \rho^*_{(h,k)}=\rho_{(h,k,l)}^*$.
		\item $I=(h,k,j)$, $J=(h,k,l)$ with $l\neq j$. Equation \eqref{Afg} becomes
		\[
		0=- t^n t^m \de_{\rho_l}\rho_{(h,k,j)}^* \rho_{(h,k)}-t^n t^m \rho_{(h,k)}\de_{\rho_j}\rho_{(h,k,l)}^*
		\]
		which is satisfied since $\de_{\rho_l} \rho^*_{(h,k,j)}=-\de_{\rho_j}\rho_{(h,k,l)}^*$.
		\item $d=0$, $e=2$. Equation \eqref{Afg} becomes
		\[
		-2\de_t (t^n \de_t t^m) \rho_J^*=0-(2t^n \de_t^2 t^m +2 \de_t t^n \de_t t^m) \rho_J^*.
		\]
		\item $d=0$, $e=3$. Equation \eqref{Afg} becomes
		\[
		(2t^n \de_t t^m+ \de_t t^n t^m) \rho_J^*=0+ (2t^n \de_t t^m + \de_t t^n t^m) \rho_J^*.
		\]
		\item $I=(h)$, $J=(k,l)$ with $h\neq k,l$.  Equation \eqref{Afg} becomes
		\[
		t^n \de_t t^m \rho_{(h,k,l)}^*=0+t^n \de_t t^m \de_{\rho_h} \rho_{(k,l)}^*.
		\]
		\item $I=(h)$, $J=(h,k,l)$ with $h\neq k,l,j$. Equation \eqref{Afg} becomes
		\[
		\de_t t^{n+m} \rho_{(k,l)}^*=0+ (t^n \de_t t^m +\de_t t^n t^m) \rho_h \rho_{(h,k,l)}^*.
		\]
		\item $I=(h,k)$, $J=(h,l,j)$ with $k\neq l,j$.  Equation \eqref{Afg} becomes
		\[
		t^{n+m} \rho_{(k,l,j)}^*=0+t^n t^m (-\rho_h)\de_{\rho_k}\rho_{(h,l,j)}^*.
		\]
		\item $d=e=2$ with $I\cap J=\emptyset$. In this case Equation \eqref{Afg} is trivial. 
	\end{itemize}

				This completes the proof in the case where $f$ and $g$ satisfy conditions a) and b).
				Before proceeding with the remaining cases we make some observations. 
				\begin{itemize}
					\item $A(f)\in L_{n+3}$ and $A(g)\in L_{m+3}$;
					\item $[f,g]\in L_{\geq n+d+m+e-2}$;
					\item $[A(f),A(g)]\in L_{\geq n+m+4}$;
					\item $[f,A(g)]\in L_{\geq n+d+m+1}$:
					\item $[A(f),g]\in L_{\geq n+m+e+1}$.
				\end{itemize}
				
				Now we assume that $(f,g)$ satisfies a) but not b), i.e. $e=3$ and $|I\cap J|<|I\cap J^c|$.
				We have that $A(g)\neq 0$ and $(f,A(g))$ satisfies a) and b), and in particular we have
				\begin{equation}\label{Aproperty}
					A([f,A(g)]+[A(f),g])=[f,g]+[A(f),A(g)].
				\end{equation}
				If $n+d+m\geq 2$ we have $[f,A(g)]+[A(f),g]\in L_{\geq 3}$ and the result follows by applying $A$ to \eqref{Aproperty}, recalling that $A^2=I$ on $L_{\geq 3}$. 
				
				If $n+d+m<2$ then necessarily $d=1$ (otherwise $(f,g)$ would satisfy condition b)), $n=m=0$ and $I\cap J=\emptyset$ and so $(f,g)$ is an exceptional pair. Indeed in this case we have  $[f,g]=[A(f),A(g)]=0$ and the result again follows.
				Therefore we can drop assumption b) and deduce that the result holds for all $d,e\leq 3$.
				
				Now assume $d\geq 4$ and $e\leq 3$. Then $(A(f),g)$ is not exceptional and the result holds for the pair $(A(f),g)$, i.e. we have that \eqref{Aproperty} holds. We have $[f, A(g)]\in L_{\geq 3}$; if $n+m+e\geq 2$ we also have $[A(f),g]\in L_{\geq 3}$ and the result follows. We are left with the case $n+m+e<2$, i.e. $(f,g)=(t\rho_I,1),(\rho_I,t),(\rho_I, \rho_k), (\rho_I,1)$. If $(f,g)$ is not exceptional then $[A(f),g]\in L_3$ and the result follows by applying $A$ to \eqref{Aproperty}.
				If $(f,g)$ is exceptional then either $(f,g)=(\rho_I,\rho_k)$ with $k\notin I$ or $(f,g)=(\rho_I,1)$. In both cases $A(g)=0$ and $[f,g]=0$ and the result also follows.
				Finally, we are left with the case $d,e\geq 4$. In this case we have $(A(f),g)$ is not exceptional and so \eqref{Aproperty} holds. Moreover both $[f,A(g)]$ and $[f,A(f)]$ belong to $L_{\geq 3}$ and the result follows by applying $A$ to Equation \eqref{Aproperty} and the proof is complete.
\end{proof}

\begin{proposition}\label{commutator} Let $I$ and $J$ be sequences with coefficients in $\{1,2,3,4,5,6\}$ and let $f=t^n\rho_I$, $g=t^m\rho_J$ be monomials  in $K(1,6)$. Then
\[[\iota(f),\iota(g)]=\begin{cases}
\iota\Big([f,g]+[A(f), A(g)]\Big) & \text{if}\,\,\, [f,g]+[A(f), A(g)]\neq 0,\\
\iota\Big([A(f),g]+[f,A(g)]\Big) & \text{if}\,\,\, [f,g]+[A(f), A(g)]=0.
\end{cases}
\]
Moreover, for all $f,g\in K(1,6)$ (not necessarily monomials) we have
\[
[\iota(f), \iota(tg)]=\iota \Big([f,tg]+[A(f), A(tg)]\Big).
\]
\end{proposition}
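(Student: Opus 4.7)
The plan is to expand
\[
[\iota(f),\iota(g)] = [f,g] + [f,A(g)] + [A(f),g] + [A(f),A(g)]
\]
by bilinearity of the bracket and linearity of $\iota = \mathrm{id} + A$, and then to use Lemma \ref{propertiesofA} to rearrange the four-term sum into a single expression of the form $\iota(X)$. The two cases in the statement correspond precisely to the two alternatives in that lemma, so the argument is almost pure bookkeeping once the lemma is available.

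First I would dispatch the non-exceptional case. Lemma \ref{propertiesofA} asserts that $A\bigl([f,g]+[A(f),A(g)]\bigr) = [A(f),g]+[f,A(g)]$, so the four-term expansion collapses to $\bigl([f,g]+[A(f),A(g)]\bigr) + A\bigl([f,g]+[A(f),A(g)]\bigr) = \iota\bigl([f,g]+[A(f),A(g)]\bigr)$. If $[f,g]+[A(f),A(g)] \neq 0$ this is exactly the first case of the proposition. If it vanishes, then applying $A$ shows that $[A(f),g]+[f,A(g)]$ also vanishes, so both candidate right-hand sides are $0$ and the second case holds trivially.

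Next I would handle the exceptional case. Lemma \ref{propertiesofA} yields both $[f,g]+[A(f),A(g)] = 0$ (confirming the hypothesis of the second case) and $A\bigl([A(f),g]+[f,A(g)]\bigr) = 0$. The four-term expansion therefore reduces to $[A(f),g]+[f,A(g)]$, and adding the vanishing $A$-image to it gives $\iota\bigl([A(f),g]+[f,A(g)]\bigr)$, as required.

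For the moreover clause I would reduce by bilinearity to monomials $f = t^n\rho_I$ and $g = t^m\rho_J$. Then $tg = t^{m+1}\rho_J$ has $t$-exponent $m+1 \geq 1$, so the pair $(f,tg)$ violates the requirement $n=m=0$ in the definition of an exceptional pair. Hence $(f,tg)$ is never exceptional, and the first part of the proposition applies to give the stated formula. The main obstacle has already been absorbed into Lemma \ref{propertiesofA}, where the case-by-case verification of the $A$-compatibility lives; given that lemma, the proposition itself is essentially automatic, and the only mild subtlety is checking that the two cases in the statement are consistent for non-exceptional pairs satisfying $[f,g]+[A(f),A(g)] = 0$, which holds because both candidate right-hand sides vanish.
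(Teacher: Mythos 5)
Your proof is correct and follows essentially the same route as the paper: expand $[\iota(f),\iota(g)]$ into the four cross terms and use Lemma \ref{propertiesofA} to fold them into $\iota$ of the appropriate two-term sum, with the moreover clause handled by noting that exceptional pairs force $n=m=0$, so $(f,tg)$ is never exceptional and the non-exceptional identity extends by bilinearity. The paper's proof is just a terser version of the same bookkeeping, including the same observation about divisibility by $t$.
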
 
\begin{proof}
The first part follows immediately from Lemma \ref{propertiesofA}. The second part also follows from Lemma \ref{propertiesofA} observing that if $(f,g)$ is an exceptional pair then both $f$ and $g$ are not divisible by $t$. 
\end{proof}
	
\begin{theorem}\label{4.6}
	The image of $\iota$ is a Lie subalgebra of $K(1,6)$ isomorphic to $E(1,6)$.
\end{theorem}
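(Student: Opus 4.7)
My plan is as follows. Closure of the image of $\iota$ under the super bracket is an immediate consequence of Proposition \ref{commutator}: for any monomials $f, g \in K(1,6)$ the bracket $[\iota(f), \iota(g)]$ is again of the form $\iota(h)$ for some $h \in K(1,6)$, and bilinearity extends this to arbitrary elements. Hence $\iota(K(1,6))$ is a Lie subalgebra of $K(1,6)$.

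To establish the isomorphism with $E(1,6)$, I would construct an explicit linear map $\Phi \colon E(1,6) \to \iota(K(1,6))$ using the decomposition of $E(1,6)$ recalled in Section \ref{preliminary}, namely
\[
E(1,6) = W_1 \oplus \bigl(\mathfrak{sl}_4 \otimes \C[[t]]\bigr) \oplus \bigl(S^2(\C^4) \otimes \Omega^1(1)^{-\frac{1}{2}}\bigr) \oplus \bigl(\wedge^2(\C^4) \otimes \Omega^1(1)^{-\frac{3}{2}}\bigr).
\]
For each summand I would pick natural representative monomials $f_v \in K(1,6)$, such as pure powers of $t$ for $W_1$, $\xi_i\eta_j$-type monomials times $f(t)$ for the $\mathfrak{sl}_4$-part, and suitable odd monomials of the correct parity for the two odd summands, and set $\Phi(v) := \iota(f_v)$.

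Next I would verify that $\Phi$ is bijective by a graded dimension count, comparing the grading on $\iota(K(1,6))$ inherited from the grading of type $(1|0,1,1,1,0,0)$ of $K(1,6)$ with the principal grading of $E(1,6)$ recalled in Section \ref{preliminary}. Preservation of the bracket would then be checked case-by-case on the four summands, using Proposition \ref{commutator} to reduce brackets of elements in $\iota(K(1,6))$ to brackets of the chosen monomials in $K(1,6)$ and comparing the result with the explicit bracket formulas on $E(1,6)$; this is analogous in spirit to the computation in the proof of Theorem \ref{g00}.

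The main obstacle will be finding the correct monomials $f_v$ representing each generator of $E(1,6)$. Because the operator $A$ pairs monomials across complementary $\rho$-degrees, each vector $\iota(f_v)$ has two components in $K(1,6)$, and matching signs, normalizations, and the twist by $\Omega^1(1)^\lambda$ so as to reproduce exactly the structure constants of $E(1,6)$ is the delicate part of the argument; once this dictionary is set up correctly, Proposition \ref{commutator} does most of the remaining work.
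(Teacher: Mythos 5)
Your first step is fine and matches the paper: Proposition \ref{commutator} shows that for monomials $f,g$ the bracket $[\iota(f),\iota(g)]$ is again of the form $\iota(h)$, and since the image of $\iota$ is a linear subspace this gives closure under the bracket.

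The second half of your proposal, however, is a program rather than a proof, and the gap sits exactly where the content of the theorem lies. You propose to choose monomials $f_v$ representing the generators of $E(1,6)=W_1\oplus(\mathfrak{sl}_4\otimes\C[[t]])\oplus(S^2(\C^4)\otimes\Omega^1(1)^{-\frac12})\oplus(\inlinewedge^2(\C^4)\otimes\Omega^1(1)^{-\frac32})$, to check bijectivity by a graded dimension count, and then to verify the structure constants case by case; but you never exhibit the dictionary, never carry out the dimension count (which is not automatic, since $\iota$ has a nontrivial kernel --- $\iota(t^n\xi_I)=0$ for the four exceptional triples $I$ with $A(t^n\xi_I)=-t^n\xi_I$ --- and identifies monomials in pairs via $\iota(f)=\iota(A(f))$), and you yourself flag that matching all signs and normalizations is ``the delicate part''. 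Asserting that a consistent choice of $f_v$ exists is essentially a restatement of the theorem, so as written the isomorphism claim is unproven. The paper avoids this computation by a structural argument that is missing from your plan: $\iota$ preserves the principal grading of $K(1,6)$ (type $(2|1,1,1,1,1,1)$, in which the $\rho_i$ are homogeneous), the local part $\iota(K(1,6))_{-1}\oplus\iota(K(1,6))_{0}\oplus\iota(K(1,6))_{1}$ is identified with the local part $E(1,6)_{-1}\oplus E(1,6)_0\oplus E(1,6)_1$, and since the principal grading of $E(1,6)$ is transitive, \cite[Proposition 5]{K1} reduces the theorem to showing that the positive part of the image is generated by its degree-one component. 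This last point is settled by writing $t^m\rho_I=(-1)^{|I|+1}[t^{m-1}\rho_i\rho_I,t\rho_i]$ for some $i\notin I$ (and $t^m\rho_I=-\frac{1}{m+3}[t^m\rho_1\cdots\rho_5,t\rho_6]$ when $|I|=6$), using $A(t\rho_i)=0$, Proposition \ref{commutator} and induction. If you wish to keep your explicit-isomorphism route you must actually produce the map on generators and verify the brackets, in the spirit of the proof of Theorem \ref{g00}; otherwise, adopting the local-part/transitivity argument closes the gap with far less computation.
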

\begin{proof}
By Proposition \ref{commutator} the image of the map $\iota$ is a subalgebra of the Lie superalgebra $K(1,6)$. In order to show that it is isomorphic to $E(1,6)$ it is convenient to use the principal grading, i.e., the grading of type $(2|1,1,1,1,1,1)$, and the variables $\rho_i$ which are homogeneous in this grading. We first notice that the map $\iota$ preserves the principal grading  of $K(1,6)$ and that its image has local part isomorphic to that of $E(1,6)$, i.e., $\iota(K(1,6))_{-1}\oplus\iota(K(1,6))_{0}\oplus\iota(K(1,6))_{1}
\cong E(1,6)_{-1}\oplus E(1,6)_0\oplus E(1,6)_1$. Since the principal grading of $E(1,6)$ is transitive, by \cite[Proposition 5]{K1} it is therefore sufficient to check that the positive part of $\iota(K(1,6))$ is generated by elements of degree one. 
%Indeed, let $|I|$ be any sequence in $\{2,3,4,\bar 2, \bar 3, \bar 4\}$ with $|I|\leq 3$, $f=t^n\xi_I\in K(1,6)_k$ with $k>1$ and $\iota(f)\neq 0$. Note that this last condition implies $n>0$, since the unique monomial $\xi_I$ of degree greater than 1 with $|I|\leq 3$ is $\xi_2\xi_3\xi_4$ and $\iota(\xi_2\xi_3\xi_4)=0$ (see Table \ref{table:iota}). 
%\begin{itemize}
%	\item If $n+|I|\neq 3$ then $f=\frac{1}{2(n+|I|-3)}[t^2,t^{n-1}\xi_I]$;
%	\item if $n=3$ and $|I|=0$ we have $f=t^3=\frac{1}{2}[t^2, t\xi_2\eta_2]-[t\xi_2, t^2 \eta_2]$;
%	\item if $n=2$ and $I=(j)$ we have $t^2 \xi_j=[t\eta_j \xi_j, t\xi_j]$;
%	\item if $n=1$ and $I=(j,k)$ we have $t \xi_j \xi_k=[\xi_k \xi_j \eta_j, t\xi_j]$.
%\end{itemize}
%By Proposition \ref{commutator}, if $n+|I|\neq 3$, since $A(t^2)=0$, we have $\iota(t^n\xi_I)=2(n+|I|-3)[\iota(t^2),\iota(t^{n-1}\xi_I)]$ and the result follows in this case by an inductive argument. If $n+|I|=3$ the result follows similarly observing that $A(t\xi_i)=0$ for all $i=2,3,4$.
To this aim we notice that for every element
$f=t^n\rho_I\in K(1,6)_k$ with $k>1$, there exist $i\in \{1,2,3,4,5,6\}$ and $g\in K(1,6)_{k-1}$ such that
$f=[g,t\rho_i]$, so that $\iota(f)=\iota([g,t\rho_i])=[\iota(g),\iota(t\rho_i)]$ by Proposition \ref{commutator}, since $A(t\rho_i)=0$. Indeed, we have:
$t^m\rho_I=(-1)^{(|I|+1)}[t^{m-1}\rho_i\rho_I, t\rho_i]$ for some $i\notin I$, and
$t^m\rho_I=-\frac{1}{m+3}[t^{m}\rho_1\dots\rho_5, t\rho_6]$ if
$I=\{1,2,3,4,5,6\}$. An inductive argument concludes the proof.
\end{proof}

\begin{remark}
	\begin{enumerate}Let $I$ be a sequence with distinct entries in $\{2,3,4,\bar 2, \bar 3, \bar 4\}$ and $n\geq 0$;
		%\item  $A(t^n \xi_I)\neq 0$ then $|A(f)|=6-|f|$;
		\item[i)] if $|I|\geq 3$ then $A(t^n \xi_I)\neq 0$;
		\item[ii)] if $A(t^n \xi_I)\neq 0$ then $A(A(t^n \xi_I))=t^n \xi_I$ and in particular $\iota(t^n \xi_I)=\iota(A(t^n \xi_I))$;
		\item[iii)] the previous observations imply that $E(1,6)$ is spanned by the elements $\iota(t^n \xi_I)$ with $|I|\leq 3$;
		\item[iv)] if $I\in \{( 2,  3,  4),(3, \bar 2,\bar 4,), (2, \bar 3,\bar 4), (4, \bar 2,\bar 3)\}$ then $A(t^n \xi_I)=-t^n \xi_I$ and hence $\iota(t^n \xi_I)=0$.
	\end{enumerate}
\end{remark}
It follows from the previous remark that the set
\[
\{\iota(t^n \xi_I):\,|I|\leq 3 \}
\}\]
is a spanning set for $E(1,6)$. 

We observe that if $|I|\leq 2$ then $\iota(\xi_I)=\xi_I$. The computation of all $\iota(\xi_I)$ with $|I|=3$ is shown in Table \ref{table:iota}. 
\begin{table}[h]
\begin{center}
	\begin{tabular}{|c|c|}
		\hline
		\rule[-1mm]{0mm}{5mm}
		$f$&$\iota(f)$\\
		\hline
		\hline
		\rule[-1mm]{0mm}{5mm}
		$\eta_i \eta_j \eta_k$& $2 \eta_i \eta_j \eta_k$\\
		\hline
		\rule[-1mm]{0mm}{5mm}
		$\xi_j \eta_j \eta_i $& $\xi_j\eta_j \eta_i +\xi_k \eta_k \eta_i $\\
		\hline
		\rule[-1mm]{0mm}{5mm}
		$\xi_{ i} \eta_{j} \eta_{k}$& 0\\
		\hline
		\rule[-1mm]{0mm}{5mm}
		$ \xi_j \xi_{ i}\eta_{i}$& $\xi_{ j} \xi_{ i} \eta_{i}-\xi_{j} \xi_{k} \eta_{k}$\\
		\hline
		\rule[-1mm]{0mm}{5mm}
		$\xi_{i} \xi_{j} \eta_{k}$ & 2 $\xi_{i} \xi_{ j} \eta_{k}$\\
		\hline
		\rule[-1mm]{0mm}{5mm}
		$\xi_{i} \xi_{ j} \xi_{k}$& 0\\	
		\hline
	\end{tabular}
\end{center}
\vspace{2mm}
\caption{Computation of $\iota$ on elements $\xi_I$ with $|I|=3$.  Numbers $i,j,k$ represent any cyclic permutation of $2,3,4$.}\label{table:iota}
\end{table}

We are going to define an explicit linear map $\Psi:E(1,6)\rightarrow E(4,4)$. The values of the map $\Psi$ on all elements $\iota(\xi_I)\neq 0$, where $I$ is any sequence with coefficients in $\{2,3,4,\bar{2},\bar{3},\bar{4}\}$ with $|I|\leq 3$, are shown in Table \ref{defnpsi}. 
\begin{table}[h]
	\begin{center}
		\begin{tabular}{|c|c|c|}
			\hline
			\rule[-1mm]{0mm}{6mm}
			$f$&$\iota(f)$& $\Psi(\iota(f))$\\
			\hline
			\hline
			\rule[-1mm]{0mm}{6mm}
			$1$& $1$ &  $\de_{x_1}$\\
			\hline
			\rule[-1mm]{0mm}{6mm}
			$\eta_i\eta_j$ & $\eta_i\eta_j$& $\de_{x_k}$\\
			\hline
			\rule[-2mm]{0mm}{7mm}
			$\eta_i$& $\eta_i$& $\frac{\sqrt 2}{2}dx_i$\\
			\hline
		\rule[-1mm]{0mm}{6mm}
			$\eta_i \eta_j \eta_k $ & $2 \eta_i \eta_j \eta_k $ & $-\sqrt{2}dx_1 $ \\
			\hline
			\rule[-1mm]{0mm}{6mm}
			$\xi_{i}\eta_j $ & $\xi_{i}\eta_j $ & $x_j \de_{x_i} $ \\
			\hline
			\rule[-1mm]{0mm}{6mm}
			$\xi_{i}\eta_i $ & $\xi_{i}\eta_i $ & $-x_j \de_{x_j}-x_k \de_{x_k} $ \\
			\hline
			\rule[-2mm]{0mm}{7mm}
			$\xi_{i} $ & $\xi_{i} $ & $ \frac{\sqrt 2}{2}(-x_jdx_k+x_kdx_j) $ \\
			\hline
			\rule[-1mm]{0mm}{6mm}
			$\xi_j \eta_j \eta_{i} $ & $\xi_j \eta_j \eta_{i}+\xi_k \eta_k \eta_{i} $ & $\sqrt 2 x_i dx_1 $ \\
			\hline
			\rule[-1mm]{0mm}{6mm}
			$\xi_{i} \xi_{j} $ & $\xi_{i} \xi_{j} $ & $x_k(x_2\de_{x_2}+x_3 \de_{x_3}+x_4 \de_{x_4}) $ \\
			\hline
			\rule[-1mm]{0mm}{6mm}
			$\xi_j  \xi_{i}\eta_{i} $ & $\xi_j  \xi_{i} \eta_{i}-\xi_j \xi_{k} \eta_{k} $ & $\sqrt 2 x_ix_k dx_1 $\\
			\hline
			\rule[-1mm]{0mm}{6mm}
			$\xi_i \xi_j \eta_k $ & $ 2\xi_i \xi_j \eta_k$ & $-\sqrt 2 x_k^2 dx_1 $\\
			\hline
		\end{tabular}
	\end{center}
	\vspace{2mm}
	\caption{Definition of $\Psi$ on elements $\iota(\xi_I)\neq 0$, with $|I|\leq 3$. Numbers $i,j,k$ represent any cyclic permutation of $2,3,4$.}
	\label{defnpsi}
\end{table}
%\begin{align*}
%	\Psi(1)&=\de_{x_1},\\
%	\Psi(\eta_i\eta_j)&= \de_{x_k},\\
%	\Psi(\eta_i)&=\frac{\sqrt 2}{2}dx_i,\\
%	\Psi(\eta_i \eta_j \eta_k)&=-\frac{\sqrt{2}}{2}dx_1,\\
%	\Psi(\xi_{i}\eta_j )&= x_j \de_{x_i}\\
%	\Psi(\xi_i \eta_{i})&=-x_j \de_{x_j}-x_k \de_{x_k},\\
%	\Psi(\xi_{i})&= \frac{\sqrt 2}{2}(-x_jdx_k+x_kdx_j),\\
%	\Psi(\iota(\xi_j \eta_j \eta_{i}))=\Psi(\xi_j \eta_j \eta_{i}+\xi_k \eta_k \eta_{i})&=\sqrt 2 x_i dx_1,\\
%	\Psi(\xi_{i} \xi_{j})&=x_k(x_2\de_{x_2}+x_3 \de_{x_3}+x_4 \de_{x_4}),\\
%	\Psi(\iota(\xi_j  \xi_{i}\eta_{i}))=\Psi(\xi_j  \xi_{i} \eta_{i}-\xi_j \xi_{k} \eta_{k})&= \sqrt 2 x_ix_k dx_1,\\
%	\Psi(\xi_i \xi_j \eta_k )&= -\frac{\sqrt 2}{2}x_k^2 dx_1.
%\end{align*}
\begin{definition}\label{def:psi}
	We define the map $\Psi:E(1,6)\rightarrow E(4,4)$ as the unique linear map such that for all sequence $I$ with coefficients in $\{2,3,4,\bar{2},\bar{3},\bar{4}\}$ with $|I|\leq 3$ we have $\Psi(\iota(\xi_I))$ is defined as in Table \ref{defnpsi} and, for all $n\geq 1$,
	\[
	\Psi(\iota(t^n \xi_I))=2^nx_1^n \Psi(\iota(\xi_I)).
	\]
\end{definition}
Next target is of course to prove that  $\Psi$ is indeed an embedding of Lie superalgebras and for this we need the following preliminary results.

\begin{lemma}\label{lemmahomo1}
	For all $I,J$ sequences with coefficients in $\{2,3,4,\bar 2, \bar 3, \bar 4\}$ with $|I|,|J|\leq 3$ and all $n\geq 0$ we have
	\[
	[x_1^n \Psi(\iota(\xi_I)),\Psi(\iota(\xi_J))]=(1-n)x_1^n[ \Psi(\iota(\xi_I)),\Psi(\iota(\xi_J))]+nx_1^{n-1}[x_1\Psi(\iota(\xi_I)),\Psi(\iota(\xi_J))].
	\]
\end{lemma}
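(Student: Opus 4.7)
The strategy is to recognize that the claimed identity is a general feature of the bracket in $E(4,4)$ rather than something specific to the particular images in Table \ref{defnpsi}. Write $Y=\Psi(\iota(\xi_I))$ and $Z=\Psi(\iota(\xi_J))$, and for $f\in\C[[x_1,\dots,x_4]]$ set $\delta(f):=[fY,Z]-f[Y,Z]$. A direct rearrangement of the identity to be proved shows it is equivalent to
\[
\delta(x_1^n)=n\,x_1^{n-1}\,\delta(x_1),
\]
so the plan is to compute $\delta(f)$ explicitly in each parity case and verify this Leibniz-type scaling on powers of $x_1$.

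Since each entry in Table \ref{defnpsi} is either a vector field in $W_4$ or a $1$-form in $\Omega^1(4)^{-1/2}$, both $Y$ and $Z$ are homogeneous in $E(4,4)$, yielding three cases to analyse (a fourth reduces to the mixed one by graded skew-symmetry). In the even--even case, the standard bracket formula $[fY,Z]=f[Y,Z]-Z(f)\,Y$ in $W_4$ gives $\delta(f)=-Z(f)\,Y$. In the mixed case $Y\in W_4$, $Z=\omega\in\Omega^1(4)^{-1/2}$, combining $L_{fY}=fL_Y+df\wedge i_Y$ with $\diver(fY)=f\,\diver(Y)+Y(f)$ and the twisted action $X\cdot\omega=L_X\omega-\tfrac{1}{2}\,\diver(X)\,\omega$ yields
\[
\delta(f)=df\wedge i_Y\omega-\tfrac{1}{2}\,Y(f)\,\omega.
\]
In the odd--odd case, applying the Leibniz rule $d(f\omega_1)=df\wedge\omega_1+f\,d\omega_1$ inside the formula $[f\omega_1,\omega_2]=d(f\omega_1)\wedge\omega_2+f\omega_1\wedge d\omega_2$ yields $\delta(f)=df\wedge Y\wedge Z$.

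In each of these cases $\delta(f)$ is a sum of terms of the form $V(f)\cdot(\text{fixed element of } E(4,4))$, where $V$ is a derivation of $\C[[x_1,\dots,x_4]]$ (namely $Y$, $Z$, or a coordinate partial contributing to $df$). Since $V(x_1^n)=n\,x_1^{n-1}\,V(x_1)$ for every derivation $V$, this immediately gives $\delta(x_1^n)=n\,x_1^{n-1}\,\delta(x_1)$, which is the required identity. I do not anticipate any genuine obstacle; the only bookkeeping subtlety is that the twist $\lambda=-\tfrac{1}{2}$ in the action on $\Omega^1(4)^{-1/2}$ produces an extra $Y(f)$ term in the mixed case, but this term obeys the same Leibniz scaling as the other contributions and so does not disturb the argument.
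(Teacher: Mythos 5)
Your proof is correct, and it takes a slightly different route from the paper's. The paper argues by shape-plus-interpolation: since the entries of Table \ref{defnpsi} have coefficients not involving $x_1$, and the bracket of $E(4,4)$ is a first-order differential operator in the coefficients, $[x_1^n\Psi(\iota(\xi_I)),\Psi(\iota(\xi_J))]$ must have the form $x_1^n\alpha+nx_1^{n-1}\beta$ with $\alpha,\beta$ independent of $n$; specializing to $n=0$ and $n=1$ then identifies $\alpha$ and $\beta$, which is exactly the stated identity. You instead isolate the defect $\delta(f)=[fY,Z]-f[Y,Z]$, note the lemma is equivalent to $\delta(x_1^n)=n\,x_1^{n-1}\delta(x_1)$, and verify by the explicit parity-by-parity formulas ($\delta(f)=-Z(f)Y$ in the even--even and even--odd cases, $\delta(f)=df\wedge i_Y\omega-\tfrac12 Y(f)\omega$ in the mixed case, $\delta(f)=df\wedge Y\wedge Z$ in the odd--odd case, using $\C[[x]]$-linearity of the identification of $3$-forms with vector fields) that $\delta$ is a sum of derivations applied to $f$ times fixed elements, whence the power rule. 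Both arguments hinge on the same underlying fact — first-order dependence of the bracket on the multiplying function — but yours makes it explicit and, as a bonus, does not use the $x_1$-independence of the table entries at all, so it proves the identity for arbitrary homogeneous $Y,Z\in E(4,4)$; the paper's version is shorter because it outsources the case analysis to the observation about Table \ref{defnpsi} and a two-point interpolation in $n$.
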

\begin{proof}
	Recalling the definition of the bracket in $E(4,4)$, since the elements $\Psi(\iota(\xi_I))$ and $\Psi(\iota(\xi_J))$ have coefficients that do not involve the variable $x_1$ (see Table \ref{defnpsi}) we deduce that
	\[[x_1^n \Psi(\iota(\xi_I)),\Psi(\iota(\xi_J))]=x_1^n \alpha+nx_1^{n-1} \beta\]
	for suitable elements $\alpha,\beta\in E(4,4)$ that do not depend on $n$ and whose coefficients do not involve the variable $x_1$. One can obtain $\alpha=[ \Psi(\iota(\xi_I)),\Psi(\iota(\xi_J))]$ by letting $n=0$ and $\beta=[ x_1\Psi(\iota(\xi_I)),\Psi(\iota(\xi_J))]-[ \Psi(\iota(\xi_I)),\Psi(\iota(\xi_J))]$ by letting $n=1$.
\end{proof}
% E' UNA GENERALIZZAZIONE DEL PRECEDENTE NON PIU' NECESSARIA
%\begin{lemma}\label{lemmahomo1}
%	For all $I,J$ and all $0\leq d\leq n$ we have
%	\[
%	[x_1^n \Psi(\iota(\xi_I)),\Psi(\iota(\xi_J))]=(1-n+d)x_1^{n-d}[ x_1^d\Psi(\iota(\xi_I)),\Psi(\iota(\xi_J))]+(n-d)x_1^{n-d-1}[x_1^{d+1}\Psi(\iota(\xi_I)),\Psi(\iota(\xi_J))].
%	\]
%\end{lemma}
%\begin{proof}
%	Since the elements $\Psi(\iota(\xi_I))$ and $\Psi(\iota(\xi_J))$ have coefficients that do not involve the variable $x_1$ we deduce that
%		\[[x_1^n \Psi(\iota(\xi_I)),\Psi(\iota(\xi_J))]=x_1^n A+nx_1^{n-1} B\]
%		for suitable elements $A,B\in E(4,4)$ that do not depend on $n$ and whose coefficients do not involve the variable $x_1$. Computing this formula for $n=d,d+1$ one can deduce that One can obtain \[x_1^{d+1}A=(d+1)x_1[ x_1^d\Psi(\iota(\xi_I)),\Psi(\iota(\xi_J))]-d [ x_1^{d+1}\Psi(\iota(\xi_I)),\Psi(\iota(\xi_J))]
%		\] 
%		and 
%		\[x_1^dB=[ x_1^{d+1}\Psi(\iota(\xi_I)),\Psi(\iota(\xi_J))]-[ x_1^d\Psi(\iota(\xi_I)),\Psi(\iota(\xi_J))].\] 
%		The result follows for $n>d$. For $n=d$ the result is trivial.
%\end{proof}
\begin{lemma}\label{lemmahomo2}
	Let $I,J$ be sequences with coefficients in $\{2,3,4,\bar 2, \bar 3, \bar 4\}$ with $|I|,|J|\leq 3$ and $n\geq 1$ such that $[A(t^n\xi_I),A(\xi_J)]=0$. Then
	\[
	[\iota(t^n \xi_I),\iota(\xi_J)]=\iota\big((1-n)t^n[\xi_I,\xi_J]+nt^{n-1}[t\xi_I,\xi_J]\big).
	\]
\end{lemma}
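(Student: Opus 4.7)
The plan is to reduce the assertion to a direct bracket computation in $K(1,6)$ via Proposition \ref{commutator}. Since $n \geq 1$, the element $t^n\xi_I$ is divisible by $t$, so the second (unconditional) part of Proposition \ref{commutator} applies to the pair $(\xi_J, t^n\xi_I)$, yielding
\[
[\iota(\xi_J), \iota(t^n\xi_I)] = \iota\bigl([\xi_J, t^n\xi_I] + [A(\xi_J), A(t^n\xi_I)]\bigr).
\]
The map $A$ preserves parity (since $|\xi_I^\#| = 6 - |I| \equiv |I|\pmod 2$), so super skew-symmetry of the bracket in $K(1,6)$ gives $[A(\xi_J), A(t^n\xi_I)] = \pm[A(t^n\xi_I), A(\xi_J)]$, which vanishes by hypothesis. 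Applying super skew-symmetry to both sides then transports this to
\[
[\iota(t^n\xi_I), \iota(\xi_J)] = \iota([t^n\xi_I, \xi_J]).
\]

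By linearity of $\iota$, the lemma reduces to the purely algebraic identity
\[
[t^n\xi_I, \xi_J] = (1-n)t^n[\xi_I, \xi_J] + nt^{n-1}[t\xi_I, \xi_J]
\]
in $K(1,6)$. Setting $\Delta_{I,J} := \sum_{i=1}^3(\de_{\xi_i}\xi_I\cdot\de_{\eta_i}\xi_J + \de_{\eta_i}\xi_I\cdot\de_{\xi_i}\xi_J)$, which is independent of $t$, a direct application of the contact bracket formula to $f=t^n\xi_I$, $g=\xi_J$ yields
\[
[t^n\xi_I, \xi_J] = (-1)^{|I|}t^n\Delta_{I,J} - n(2-|J|)t^{n-1}\xi_I\xi_J.
\]
Specializing to $n=0,1$ gives the values of $[\xi_I,\xi_J]$ and $[t\xi_I,\xi_J]$, and substituting into the claimed right-hand side recovers the same expression.

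The structural reason behind the identity is that the map $h \mapsto [h\xi_I, \xi_J]$ from $\C[[t]]$ into $K(1,6)$ is affine in the pair $(h, \de_t h)$, so its value at $h = t^n$ is determined by its values at $h=1$ and $h=t$ via the expansion $t^n = (1-n)\cdot 1\cdot t^n + n\cdot t\cdot t^{n-1}$. The main obstacle is only notational---keeping track of parities in the super skew-symmetry step---but this is a formality once one observes that $A$ is parity-preserving. Otherwise the proof is a straightforward unpacking of definitions combined with Proposition \ref{commutator}.
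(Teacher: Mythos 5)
Your proposal is correct and follows essentially the same route as the paper: use the hypothesis $[A(t^n\xi_I),A(\xi_J)]=0$ together with the second part of Proposition \ref{commutator} (with the $t$-divisible factor placed appropriately, up to super skew-symmetry) to reduce to $[\iota(t^n\xi_I),\iota(\xi_J)]=\iota([t^n\xi_I,\xi_J])$, and then verify the identity $[t^n\xi_I,\xi_J]=(1-n)t^n[\xi_I,\xi_J]+nt^{n-1}[t\xi_I,\xi_J]$ from the explicit bracket formula $[t^n\xi_I,\xi_J]=-nt^{n-1}(2-|J|)\xi_I\xi_J+(-1)^{|I|}t^n\sum_i(\de_{\xi_i}\xi_I\,\de_{\eta_i}\xi_J+\de_{\eta_i}\xi_I\,\de_{\xi_i}\xi_J)$, exactly as in the paper. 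The only difference is cosmetic (your explicit parity/skew-symmetry bookkeeping and the affine-in-$(h,\de_t h)$ remark), so no further changes are needed.
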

\begin{proof} Since $n\geq 1$, Proposition \ref{commutator} provides
	\[[\iota(t^n\xi_I),\iota(\xi_J)]=\iota([t^n\xi_I,\xi_J])+\iota([A(t^n\xi_I),A(\xi_J)])=\iota([t^n\xi_I,\xi_J]).
	\]

The result now follows from the following identity
\[
[t^n\xi_I,\xi_J]=(1-n)t^n[\xi_I,\xi_J]+nt^{n-1}[t\xi_I,\xi_J],
\]
which is an immediate consequence of the following formula for the bracket in $K(1,6)$ which is valid for all $n\geq 0$
\[
[t^n\xi_I,\xi_J]=-nt^{n-1}(2-|J|)\xi_I\xi_J +(-1)^{|I|}t^n \sum_{i=1}^{3} \big(\de_{\xi_i}\xi_I \de_{\eta_i}\xi_J+ \de_{\eta_i}\xi_I \de_{\xi_i}\xi_J\big).
\]
\end{proof}
\begin{theorem}\label{4.7}
	The map $\Psi:E(1,6)\rightarrow E(4,4)$ given in Definition \ref{def:psi} is an embedding of Lie superalgebras.
\end{theorem}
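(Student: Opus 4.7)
My strategy has three parts: well-definedness, bracket preservation on a spanning set (using the structural Lemmas~\ref{lemmahomo1}--\ref{lemmahomo2} to reduce to low $t$-degree), and injectivity by simplicity of $E(1,6)$.

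\emph{Well-definedness.} By the remark preceding Definition~\ref{def:psi}, $E(1,6)$ is spanned by the elements $\iota(t^n \xi_I)$ with $n \geq 0$ and $|I| \leq 3$. The only nontrivial linear relations among these come from $\iota(t^n \xi_I) = \iota(t^n A(\xi_I))$ when $A(\xi_I) \neq 0$. Since the prescription $\Psi(\iota(t^n \xi_I)) = 2^n x_1^n \Psi(\iota(\xi_I))$ scales uniformly in $n$, and $A$ preserves $t$-degree, it suffices to check $\Psi(\iota(\xi_I)) = \Psi(\iota(A(\xi_I)))$ in the finitely many cases $|I|=3$ with $A(\xi_I) \neq 0$, directly from Tables~\ref{table:iota} and \ref{defnpsi}.

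\emph{Bracket preservation.} By bilinearity it suffices to show
$$[\Psi(\iota(t^a \xi_I)),\,\Psi(\iota(t^b \xi_J))] = \Psi\bigl([\iota(t^a \xi_I),\,\iota(t^b \xi_J)]\bigr)$$
for all $a,b \geq 0$ and $|I|,|J| \leq 3$. Lemma~\ref{lemmahomo1} reduces the $E(4,4)$ bracket $[x_1^a \Psi(\iota(\xi_I)),\Psi(\iota(\xi_J))]$ (for $a \geq 1$) to the two cases $a=0$ and $a=1$, while Lemma~\ref{lemmahomo2} produces the perfectly parallel reduction of $[\iota(t^a\xi_I),\iota(\xi_J)]$ inside $E(1,6)$, under the hypothesis $[A(t^a\xi_I),A(\xi_J)]=0$ which holds automatically when $|J| \leq 2$ (so that $A(\xi_J)=0$). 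For pairs where both $a,b \geq 1$, the second part of Proposition~\ref{commutator} gives
$$[\iota(t^a \xi_I),\,\iota(t^b \xi_J)] = \iota\bigl([t^a \xi_I,\, t^b \xi_J] + [A(t^a \xi_I),\, A(t^b \xi_J)]\bigr),$$
bypassing the exceptional-pair subtlety; combined with an iterated application of Lemma~\ref{lemmahomo1} on the $E(4,4)$ side, this again reduces to the same low-$t$-degree base. Thus the entire verification collapses to the finite set of base cases with $a,b \in \{0,1\}$ and $|I|,|J| \leq 3$.

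\emph{Injectivity.} The linearly compact Lie superalgebra $E(1,6)$ is simple, so $\ker \Psi$ equals either $0$ or all of $E(1,6)$; since $\Psi(\iota(1)) = \partial_{x_1} \neq 0$, the kernel is trivial.

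\emph{Main obstacle.} The substantive work is the finite base-case check in the bracket-preservation step: one computes, on one side, the contact bracket of $K(1,6)$ composed with the $\iota$- and $\Psi$-prescriptions of Tables~\ref{table:iota} and \ref{defnpsi}, and on the other side the $W_4$-on-$\Omega^1(4)^{-1/2}$ bracket of $E(4,4)$. The number of pairs $(I,J)$ with $|I|,|J|\leq 3$ and $a,b \in \{0,1\}$ is large, but can be pruned considerably using the $S_3$-symmetry permuting the indices $\{2,3,4\}$, parity, and the implicit $\Z$-grading carried by both sides; organizing this case analysis into a manageable list is the main labor.
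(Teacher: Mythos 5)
There is a genuine gap in your bracket-preservation step, and it traces back to the key idea of the paper that you are missing: the reduction, via transitivity of the principal grading of $E(4,4)$, to checking the homomorphism identity only when the second argument has degree $-1$, i.e.\ $g=\eta_J$ with $J\subseteq\{2,3,4\}$ and no power of $t$. With that reduction in place, at most one argument ever carries a $t$-power, which is exactly the situation Lemmas \ref{lemmahomo1} and \ref{lemmahomo2} are designed for, and the dependence on $n$ on both sides is affine, so the two base values $n=0,1$ determine everything. Your scheme instead tries to treat arbitrary pairs $(t^a\xi_I,t^b\xi_J)$: for $a,b\geq 1$ you invoke the second part of Proposition \ref{commutator} together with an ``iterated application of Lemma \ref{lemmahomo1}'', but Lemma \ref{lemmahomo1} only computes $[x_1^n\Psi(\iota(\xi_I)),\Psi(\iota(\xi_J))]$ with the second factor $x_1$-free, and no analogue for two $x_1$-powers (nor the matching $E(1,6)$-side identity for two $t$-powers) is available or supplied. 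Worse, the claimed interpolation from $a,b\in\{0,1\}$ cannot be taken for granted: the $A$-corrections in $\iota(t^a\xi_I)=t^a\xi_I+(-1)^{\cdots}\de_t^{3-|I|}t^a\xi_I^{\#}$ involve $\de_t^{3-|I|}t^a$, whose coefficients are polynomial of degree up to $3$ in $a$, so the bracket $[\iota(t^a\xi_I),\iota(t^b\xi_J)]$ is not an affine function of $(a,b)$ and four base cases do not pin it down. This is precisely why the paper first reduces to $\deg g=-1$.

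A second, related omission: even in the cases you do reduce via Lemma \ref{lemmahomo2}, its hypothesis $[A(t^a\xi_I),A(\xi_J)]=0$ is only automatic for $|J|\leq 2$, as you note; but the case $\xi_J=\eta_2\eta_3\eta_4$ (where $A(\eta_J)=\eta_J$) with $\xi_I=\xi_j\xi_i\eta_i$ or $\xi_I=\xi_j\xi_i$ genuinely violates it, and these two families must be verified directly for all $n\geq 0$ (the paper does this by explicit computation before invoking the lemmas). Your proposal neither performs these checks nor provides a mechanism that covers them, so the verification does not in fact ``collapse to the finite set of base cases'' as claimed. The well-definedness remark and the injectivity argument via simplicity of $E(1,6)$ are fine, and the final finite check (done by computer in the paper) matches the paper's strategy; but as written the reduction to that finite check is not established.
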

\begin{proof}
We have to show that
\begin{equation}
	[\Psi(\iota(f)),\Psi(\iota(g))]=\Psi([\iota(f),\iota(g)]) \label{eqn:brack}
\end{equation}
for all elements $\iota(f),\iota(g)\in E(1,6)$. By transitivity of the principal grading of $E(4,4)$ it is enough to show \eqref{eqn:brack} with $\deg g=-1$, i.e., we can assume that
\begin{itemize}
	\item $f=t^n\xi_I$ for some $n\geq 0$ and for some sequence $I$ with coefficients in $\{2,3,4,\bar{2},\bar{3},\bar{4}\}$ such that $|I|\leq 3$;
	\item $g=\eta_J$ for some sequence $J$ with coefficients in $\{2,3,4\}$.
\end{itemize}
Now we observe that many such pairs satisfy the conditions in Lemma \ref{lemmahomo2}, namely, $[A(t^n\xi_I),A(\eta_J)]=0$. Indeed, we first recall that if $|J|\leq 2$ then $A(\eta_J)=0$, so the only case left to consider is $\eta_J=\eta_2\eta_3\eta_4$ which satisfy $A(\eta_J)=\eta_J$.
In this case we can check that $[A(t^n\xi_I),A(\eta_J)]\neq 0$ in the following two cases only: $\xi_I=\xi_j\xi_i \eta_i$ or $\xi_I=\xi_j \xi_i$ for some $i,j\in \{2,3,4\}$, $i\neq j$. We treat both cases explicitly.
In the first case, for the sake of simplicity, let us assume  $\xi_I=\xi_2\xi_3\eta_3$. We have
\begin{align*}
[\Psi(\iota(t^n\xi_2\xi_3\eta_3)),\Psi(\iota(\eta_2\eta_3\eta_4))]&=
2^{n+1}[x_1^n\Psi(\iota(\xi_2\xi_3\eta_3)),\Psi(\eta_2\eta_3\eta_4)]\\
&=2^{n+1}[x_1^n\Psi(\xi_2\xi_3\eta_3-\xi_2\xi_4\eta_4),\Psi(\eta_2\eta_3\eta_4)]\\
&=-2^{n+2}[x_1^nx_3x_4dx_1, dx_1]=0.
\end{align*}
On the other hand,
\begin{align*}
[\iota(t^n\xi_2\xi_3\eta_3),\iota(\eta_2\eta_3\eta_4)]&=
[t^n\xi_2\xi_3\eta_3, \eta_2\eta_3\eta_4]+[t^n\xi_2\xi_4\eta_4, \eta_2\eta_3\eta_4]\\
&=-t^n\xi_2\eta_3\eta_2\eta_4-t^n\xi_2\eta_4\eta_2\eta_3=0,
\end{align*}
hence $\Psi([\iota(t^n\xi_2\xi_3\eta_3),\iota(\eta_2\eta_3\eta_4)])=0$.

In the latter case, we let assume $\xi_I=\xi_3 \xi_4$. We have
\[
[t^n\xi_3\xi_4,\eta_2\eta_3\eta_4]=nt^{n-1}\xi_3\xi_4\eta_2\eta_3\eta_4-t^n(\xi_4\eta_2\eta_4+\xi_3\eta_2 \eta_3)
\]
and
\[
[A(t^n\xi_3\xi_4),\eta_2\eta_3\eta_4]=[-nt^{n-1}\xi_2\xi_3\xi_4\eta_2),\eta_2\eta_3\eta_4]=-nt^{n-1} \xi_3 \xi_4 \eta_2\eta_3\eta_4
\]
and therefore
\[[\iota(t^n\xi_3\xi_4),\iota(\eta_2\eta_3\eta_4)]=-2t^n(\xi_4\eta_2\eta_4+\xi_3\eta_2 \eta_3)=-2\iota(t^n \xi_4\eta_2\eta_4)\]
Finally
\[
\Psi([\iota(t^n\xi_3\xi_4),\iota(\eta_2\eta_3\eta_4)])=-2 \Psi(\iota(t^n\xi_4\eta_2\eta_4))=-2^{n+1}x_1^n \Psi(\iota(\xi_4\eta_2\eta_4))=2^{n+1}\sqrt 2 x_1^nx_2 dx_1.\]
On the other hand
\begin{align*}
	[\Psi(\iota(t^n\xi_3\xi_4)), \Psi(\iota(\eta_2\eta_3\eta_4))] &= 2^n[x_1^n\Psi(\iota(\xi_3\xi_4)), -\sqrt 2 dx_1]\\
	&= 2^n[x_1^n x_2(x_2\de_{x_2}+x_3\de_{x_3}+x_4\de_{x_4}),-\sqrt 2 dx_1 ]\\
	&= 2^{n+1}\sqrt 2 x_1^n x_2 dx_1.
\end{align*} 
Therefore we can assume that $[A(t^n\xi_I),A(\eta_J)]=0$ and next target of the proof is to show that if \eqref{eqn:brack} holds for $n=0,1$ then it holds for all $n\geq 0$. Indeed the lefthand side of \eqref{eqn:brack} can be rewritten by Lemma \ref{lemmahomo1} as
\begin{align*}
[\Psi(\iota(t^n\xi_I)),\Psi(\iota(\xi_J))]&=[2^nx_1^n \Psi(\iota(\xi_I)), \Psi(\iota(\xi_J))]\\
&= 2^n (1-n)x_1^n[ \Psi(\iota(\xi_I)),\Psi(\iota(\xi_J))]+n2^n x_1^{n-1}[x_1\Psi(\iota(\xi_I)),\Psi(\iota(\xi_J))]\\
&=2^n (1-n)x_1^n[ \Psi(\iota(\xi_I)),\Psi(\iota(\xi_J))]+n2^{n-1} x_1^{n-1}[\Psi(\iota(t\xi_I)),\Psi(\iota(\xi_J))] .
\end{align*}
On the other hand, by Lemma \ref{lemmahomo2}, the righthand side can be rewritten as
\begin{align*}
	\Psi([\iota(t^n\xi_I),\iota(\xi_J)])&=\Psi(\iota((1-n)t^n[\xi_I,\xi_J]+nt^{n-1}[t\xi_I,\xi_J]) )\\
	&= 2^n(1-n)x_1^n\Psi(\iota([\xi_I,\xi_J]))+n2^{n-1}x_1^{n-1}\Psi(\iota([t\xi_I,\xi_J])).
\end{align*}
It remains to verify that \eqref{eqn:brack} is satisfied for $f=\xi_I, t\xi_I$ and $g=\eta_J$ and this has been done with the help of a computer.
\end{proof}

This result allows us to locate a subalgebra of $E(4,4)$ isomorphic to $E(1,6)$.  Let
\[
V_0=\langle \{\de_1,\de_i,x_i\de_j, x_i(x_2\de_2+x_3\de_3+x_4\de_4):\, i,j=2,3,4\}\rangle \subseteq E(4,4)_{\bar 0}
\]
and 
\[
V_1=\langle \{dx_1,dx_i, x_idx_j-x_jdx_i,x_idx_1,x_i x_jdx_1:\, i,j=2,3,4\}\rangle \subseteq E(4,4)_{\bar 1}
\]
be subspaces of $E(4,4)$.
\begin{corollary}
	The subspace
	\[ \C[[x_1]](V_0+V_1)\]
	of $E(4,4)$ is indeed a subalgebra of $E(4,4)$ isomorphic to $E(1,6)$.
\end{corollary}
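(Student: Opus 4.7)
The plan is to derive this corollary as a direct consequence of Theorem \ref{4.7}: since $\Psi:E(1,6)\to E(4,4)$ is there shown to be an injective Lie superalgebra homomorphism, the corollary reduces to identifying its image as a vector subspace of $E(4,4)$ and checking that this image equals $\C[[x_1]](V_0+V_1)$.

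Recall from the spanning statement just before Table \ref{defnpsi} that $E(1,6)$ is spanned, in the linearly compact topology, by the elements $\iota(t^n \xi_I)$ with $n\geq 0$ and $|I|\leq 3$. By Definition \ref{def:psi}, $\Psi(\iota(t^n \xi_I)) = 2^n x_1^n\,\Psi(\iota(\xi_I))$, so $\Psi(E(1,6))$ coincides with the $\C[[x_1]]$-module generated by the finite-dimensional subspace $W := \mathrm{span}_{\C}\{\Psi(\iota(\xi_I)) : |I|\leq 3\}$. The task is therefore reduced to showing $W=V_0+V_1$, which I would carry out by a direct inspection of the third column of Table \ref{defnpsi}: each listed value of $\Psi(\iota(\xi_I))$ is, up to a nonzero scalar, one of the explicit generators of $V_0$ (for $|I|$ even) or of $V_1$ (for $|I|$ odd).

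The only subtle point is that the individual diagonal vector fields $x_i\de_i$ for $i\in\{2,3,4\}$ in $V_0$ do not appear as single entries of the table, but only as $\C$-linear combinations of the three vectors $-x_j\de_j - x_k\de_k$ arising from $\iota(\xi_i\eta_i)$ for cyclic $(i,j,k)$; a $3\times 3$ determinant check confirms their linear independence and hence that they span the full diagonal subspace $\langle x_2\de_2, x_3\de_3, x_4\de_4\rangle$. Once this observation is in place, $W = V_0 + V_1$ and therefore $\Psi(E(1,6))=\C[[x_1]](V_0+V_1)$, which by Theorem \ref{4.7} is a Lie subalgebra of $E(4,4)$ isomorphic to $E(1,6)$. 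No real obstacle remains beyond the routine bookkeeping of Table \ref{defnpsi}.
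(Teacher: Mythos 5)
Your argument is correct and is essentially the paper's own (implicit) proof: the corollary is read off by combining Theorem \ref{4.7} with Definition \ref{def:psi}, so that the image of $\Psi$ is the closed $\C[[x_1]]$-span of the elements listed in Table \ref{defnpsi}, which coincides with $\C[[x_1]](V_0+V_1)$. Your extra check that the diagonal fields $x_i\de_i$ are recovered from the span of the three combinations $-x_j\de_j-x_k\de_k$ is exactly the only non-immediate point in matching the table with the generators of $V_0$.
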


\end{document}